\documentclass[11pt]{article}
\usepackage{amssymb}
\usepackage{amsmath}
\allowdisplaybreaks
\usepackage{amsthm}
\allowdisplaybreaks
\usepackage{amscd}
\usepackage{graphicx}
\usepackage{ifpdf}
\ifpdf
  \usepackage[colorlinks=true,linkcolor=blue,citecolor=red, final,hyperindex]{hyperref}
\else
  \usepackage[colorlinks,final,backref=page ,hyperindex,hypertex]{hyperref}
\fi
\usepackage{footnote}
\usepackage{fancyhdr}

\allowdisplaybreaks
\renewcommand{\theequation}{\arabic{section}.\arabic{equation}}
\setlength{\textheight}{8.8in}
\setlength{\topmargin}{-.1in}
\setlength{\textwidth}{6in}
\setlength{\oddsidemargin}{.26in}
\setlength{\evensidemargin}{.26in}
\parskip=.08in

\def\<{\langle}
\def\>{\rangle}
\def\a{\alpha}
\def\b{\beta}

\def\c{\cdot}

\def\d{\delta}

\def\f{\phi}
\def\p{\psi}

\newtheorem{thm}{Theorem}[section]
\newtheorem{cor}{Corollary}[section]
\newtheorem{rem}{Remark}[section]

\newtheorem{prop}{Proposition}[section]
\newtheorem{lem}{Lemma}[section]

\theoremstyle{definition}
\newtheorem{df}{Definition}[section]

\theoremstyle{definition}
\newtheorem{exa}{Example}[section]

\date{}
\begin{document}
\title{\sf Deformations and Extensions  of BiHom-alternative algebras}
\author{T. Chtioui, S. Mabrouk, A. Makhlouf}
\author{{ Taoufik Chtioui$^{1}$
 \footnote{ E-mail: chtioui.taoufik@yahoo.fr}
, Sami Mabrouk$^{2}$
 \footnote{  E-mail: mabrouksami00@yahoo.fr}
, Abdenacer Makhlouf$^{3}$
 \footnote{Corresponding author,  E-mail: Abdenacer.Makhlouf@uha.fr}
}\\
{\small 1.  University of Sfax, Faculty of Sciences Sfax,  BP
1171, 3038 Sfax, Tunisia} \\
{\small 2.  University of Gafsa, Faculty of Sciences Gafsa, 2112 Gafsa, Tunisia}\\
{\small 3.~ IRIMAS - Département de Mathématiques, 18, rue des frères Lumière,
F-68093 Mulhouse, France}}
 \maketitle

\begin{abstract}
 The aim of this paper is to deal with  BiHom-alternative algebras which are  a generalization of  alternative and Hom-alternative algebras, their structure  is  defined with two commuting multiplicative linear maps. We study  cohomology and one-parameter formal deformation theory of left BiHom-alternative algebras. Moreover, we study central and $T_\theta$-extensions of  BiHom-alternative algebras and their relationship  with cohomology. 
Finally, we  investigate generalized derivations  and give some relevant results.
\end{abstract}

\textbf{Key words}: BiHom-alternative algebra, representation, cohomology, formal deformation,
extension.

\textbf{M.S.C}: 17D05, 17D15, 14F10,  06B15,  53D55.

\tableofcontents

\section*{Introduction}
\ \ \
Algebras of Hom-type have been recently investigated by many authors. They appeared in the physics literature, in the ninetieth, in quantum deformations of some algebras of vector fields, such as the Witt and Virasoro algebras.  Hartwig, Larsson and Silvestrov introduced and studied, in \cite{hls}, the classes of quasi-Lie, quasi-Hom-Lie and Hom-Lie algebras.
The Hom-analogue of associative, alternative, Jordan  and Novikov algebras were introduced in \cite{ms,mak,yau}.
A categorical approach to Hom-algebras was first  investigated in  \cite{cg}
 and a generalization has been introduced  in \cite{gmmp}. A construction of a Hom-category including a group action brought to concepts of BiHom-type algebras. Hence BiHom-associative and BiHom-Lie algebras were introduced in \cite{gmmp}.
 In \cite{lmmp}, BiHom-pre-Lie algebras and BiHom-Leibniz algebras were defined and studied  via Rota-Baxter operators. The BiHom-versions of alternative, pre-alternative, quadri-alternative and Malcev algebras were introduced in \cite{ChtiouiMabroukMakhlouf} and \cite{BiHomPreAlt}. The authors gave several constructions connecting such structures.
 
Deformation theory arose essentially from geometry and physics.  It is a tool to study a mathematical object by deforming it into a family of the same kind of objects depending on a certain parameter. In the sixties, M. Gerstenhaber introduced algebraic formal deformations for associative algebras in a series of papers (see \cite{Gerstenhaber1,Gerstenhaber2,Gerstenhaber3,Gerstenhaber4}). He used formal power series and showed that the theory is intimately connected to the cohomology of the algebra. The same approach was extended to several algebraic structures (see \cite{am1,am2,cce1,cce2,mak1,mak2}). Other approaches to study deformations exist. For details see \cite{gm1,gm2,gr,mst,f}.

In this paper, we introduce representations, cohomology and formal deformation theory of left BiHom-alternative algebras. This concept generalize the classical and Hom-type cases studied by M. Elhamdadi and A. Makhlouf in \cite{em1, em2}.  In Section 1, we review the basic definitions and properties related to left BiHom-alternative algebras and recall  their representations.
In section 2,  we define first, second and third coboundary operators and corresponding cohomology groups of left BiHom-alternative algebras.
Section 3 is dedicated to develop a one-parameter formal deformation theory for left BiHom-alternative algebras.
Section 4   is devoted to the study of central and $T_\theta$-extensions  of left BiHom-alternative algebras.
In the lat section, we provide some results involving generalized derivations on BiHom-alternative algebras.

Throughout this paper $\mathbb{K}$ is a field of characteristic $0$ and all vector spaces are over $\mathbb{K}$.
We refer to a  BiHom-algebra  as quadruple $(A,\mu,\alpha,\beta)$ where $A$ is a vector space, $\mu$ is a multiplication and $\alpha,\beta$ are two linear maps. It is said to be regular if $\alpha,\beta$ are invertible. A BiHom-associator is a trilinear map $as_{\alpha,\beta}$ defined for all $x,y,z\in A$ by
$as_{\alpha,\beta}(x,y,z)=(xy)\beta(z)-\alpha(x) (yz)$.
When there is no ambiguity, we denote for simplicity the multiplication and composition by concatenation.

\renewcommand{\theequation}{\thesection.\arabic{equation}}
\section{Preliminaries and Basics}
In this section, we give some basics and properties on left BiHom-alternative algebras. We refer for the definitions to \cite{ChtiouiMabroukMakhlouf}.

\begin{df}
A left BiHom-alternative algebra $($resp. right BiHom-alternative algebra) is a quadruple $( A,\mu,\alpha,\beta)$ consisting of  a $\mathbb{K}$-vector space $ A$, a  bilinear map $\mu: A \times  A \longrightarrow  A$ and two homomorphisms $\alpha,\beta:  A \longrightarrow A$ such that $\alpha \beta=\beta\alpha$, $\alpha\mu=\mu\alpha^{\otimes^2}$ and $\beta\mu=\mu\beta^{\otimes^2}$ that satisfy the left BiHom-alternative identity, i.e. for all $x,y,z \in  A$, one has
\begin{equation}\label{sa1}
as_{\alpha,\beta}(\beta(x),\alpha(y),z)+
as_{\alpha,\beta}(\beta(y),\alpha(x),z)=0,
\end{equation}
respectively, the right BiHom-alternative identity, i.e. for all $x,y,z \in  A$, one has
\begin{equation}\label{sa2}
as_{\alpha,\beta}(x,\beta(y),\alpha(z))+
as_{\alpha,\beta}(x,\beta(z),\alpha(y))=0.
\end{equation}
A BiHom-alternative algebra is one which is both a left and a right BiHom-alternative algebra.
\end{df}

Observe that when $\alpha=\beta=Id$ the left BiHom-alternative identity $(\ref{sa1})$ reduces to the  left alternative identity.

\begin{lem}A quadruple   $(A,\mu,\alpha,\beta)$ is a left BiHom-alternative algebra $($resp. right BiHom-alternative algebra) if and only if
$$as_{\alpha,\beta}(\beta(x),\alpha(x),y)=0$$
respectively
$$as_{\alpha,\beta}(x,\beta(y),\alpha(y))=0$$
for any $x,y \in A$. (since the characteristic of the field $\mathbb{K}$ is not  $2$).
\end{lem}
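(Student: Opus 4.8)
The plan is to treat this as a standard polarization (linearization) argument, exploiting that the defining identity \eqref{sa1} is symmetric in its first two slots while $as_{\alpha,\beta}$ is trilinear and $\alpha,\beta$ are linear. I will prove the left case in detail; the right case is entirely analogous, using the symmetry in the last two slots of \eqref{sa2}.

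For the forward implication I would assume \eqref{sa1} and specialize $y=x$. Then $\alpha(y)=\alpha(x)$ and $\beta(y)=\beta(x)$, so the two summands coincide and \eqref{sa1} collapses to $2\,as_{\alpha,\beta}(\beta(x),\alpha(x),z)=0$. It is precisely here that the hypothesis $\operatorname{char}\mathbb{K}\neq 2$ enters: dividing by $2$ yields $as_{\alpha,\beta}(\beta(x),\alpha(x),z)=0$, which is the asserted diagonal identity after renaming the free variable $z$ to $y$.

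For the converse, fix $y\in A$ and consider the map $B\colon A\times A\to A$ defined by $B(x,x')=as_{\alpha,\beta}(\beta(x),\alpha(x'),y)$. Since $\beta$ and $\alpha$ are linear and $as_{\alpha,\beta}$ is trilinear, $B$ is bilinear, and the hypothesis says exactly that $B(x,x)=0$ for all $x$, i.e.\ that $B$ is alternating. Expanding $B(x+x',x+x')=0$ and using bilinearity gives
\begin{equation*}
B(x,x)+B(x,x')+B(x',x)+B(x',x')=0,
\end{equation*}
so that $B(x,x')+B(x',x)=0$. Written out, this reads
\begin{equation*}
as_{\alpha,\beta}(\beta(x),\alpha(x'),y)+as_{\alpha,\beta}(\beta(x'),\alpha(x),y)=0,
\end{equation*}
which is precisely \eqref{sa1} up to relabeling $x'$ as $y$ and $y$ as $z$.

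I do not anticipate any genuine obstacle; the argument is routine polarization. The only point requiring care is the bookkeeping of the characteristic hypothesis, which is needed (and only needed) in the forward direction to cancel the factor $2$, whereas the converse is characteristic-free. The right BiHom-alternative statement follows the same scheme verbatim with the bilinear map $(y,y')\mapsto as_{\alpha,\beta}(x,\beta(y),\alpha(y'))$ in place of $B$.
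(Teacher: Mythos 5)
Your proof is correct and is exactly the standard polarization argument the paper intends (the paper states this lemma without proof, only flagging the characteristic hypothesis in the parenthetical). Your bookkeeping is also right: $\operatorname{char}\mathbb{K}\neq 2$ is used only to pass from the identity \eqref{sa1} to the diagonal identity, while the converse polarization step is characteristic-free.
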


\begin{df}
Let $(A,\mu,\alpha,\beta)$ and $(A^{'},\mu^{'},\alpha^{'},\beta^{'})$ be two  BiHom-alternative algebras. A  homomorphism $f:A\longrightarrow A^{'}$ is said to be a morphism of  BiHom-alternative algebra if the following hold
$$f \circ \mu= \mu^{'}\circ (f \otimes f),~~f\circ \alpha=\alpha^{'}\circ f~~\textrm{and}~~f\circ \beta=\beta^{'}\circ f. $$
\end{df}

\begin{df}
Let $( A,\mu,\a,\b)$ be a BiHom-alternative algebra and $V$ be a vector space. Let
$\ell,r:  A \to gl(V)$ and  $\phi,\psi \in gl(V)$  two commuting linear maps. Then $(V,\ell,r,\phi,\psi)$ is
called a bimodule, or a representation, of $( A,\mu,\a,\b)$, if for any $x,y \in A,\ v \in V$,
\begin{align}
&\phi \ell(x)=\ell(\a(x))\phi,\  \phi r(x)=r(\a(x))\phi,\\
& \psi \ell(x)=\ell(\b(x))\psi,\  \psi r(x)=r(\b(x))\psi \\
& \ell(\b(x)\a(x))\psi(v)  = \ell(\a\b(x))\ell(\a(x))v ,  \label{rep1}\\
& r(\b(x)\a(x))\phi(v)=r(\a\b(x))r(\b(x))v, \label{rep2}\\
&r(\b(y))\ell(\b(x))\phi(v)-\ell(\a\b(x))r(y)\phi(v) = r(\a(x)y)\phi\psi(v)-r(\b(y))r(\a(x))\psi(v) ,\label{rep3}\\
&\ell(\a(y))r(\a(x))\psi(v)-r(\a\b(x))\ell(y)\psi(v)=\ell(y\b(x))\phi\psi(v)-\ell(\a(y))\ell(\b(x))\phi(v).\label{rep4}
\end{align}
\end{df}
The following  observation is straightforward. 
\begin{prop}A tuple 
$(V,\ell,r,\phi,\psi)$ is a bimodule of a BiHom-alternative algebra $( A,\mu,\a,\b)$ if and only if the direct sum $( A\oplus V,\ast,\a+\phi,\b+\psi)$  is turned into a BiHom-alternative algebra (the semidirect product) where
\begin{align}
   & (x_1+v_1)\ast (x_2+v_2)=\mu(x_1,x_2)+\ell(x_1)v_2+r(x_2)v_1, \label{product}\\
   & (\a+\phi)(x+v)=\a(x)+\phi(v),\ \ (\b+\psi)(x+v)=\b(x)+\psi(v).
    \end{align}
\end{prop}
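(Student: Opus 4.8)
The plan is to show the equivalence by direct expansion: we must verify that the semidirect product structure $(A\oplus V,\ast,\alpha+\phi,\beta+\psi)$ satisfies the defining axioms of a BiHom-alternative algebra precisely when the defining relations of a bimodule hold. First I would check the easy structural compatibilities. The maps $\alpha+\phi$ and $\beta+\psi$ commute because $\alpha\beta=\beta\alpha$ on $A$ and $\phi\psi=\psi\phi$ on $V$ by hypothesis. The multiplicativity conditions $(\alpha+\phi)\circ\ast = \ast\circ(\alpha+\phi)^{\otimes 2}$ and the analogous one for $\beta+\psi$ unpack, on the mixed components, exactly into the four intertwining relations $\phi\ell(x)=\ell(\alpha(x))\phi$, $\phi r(x)=r(\alpha(x))\phi$, $\psi\ell(x)=\ell(\beta(x))\psi$, $\psi r(x)=r(\beta(x))\psi$, while the pure $A$-component reproduces $\alpha\mu=\mu\alpha^{\otimes 2}$ and $\beta\mu=\mu\beta^{\otimes 2}$, which hold since $(A,\mu,\alpha,\beta)$ is a BiHom-alternative algebra. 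This disposes of everything except the left and right BiHom-alternative identities.

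For the alternative identities themselves, the key simplification is to invoke Lemma~1.1: the semidirect product is left BiHom-alternative iff $as_{\alpha+\phi,\beta+\psi}((\beta+\psi)(X),(\alpha+\phi)(X),Y)=0$ for all $X,Y\in A\oplus V$, and similarly on the right. Writing $X=x+u$, $Y=y+v$, I would expand the BiHom-associator of the product $\ast$ using \eqref{product}. The $A$-valued component of this associator is just $as_{\alpha,\beta}(\beta(x),\alpha(x),y)$, which vanishes because $(A,\mu,\alpha,\beta)$ is itself left BiHom-alternative. The $V$-valued component collects all the terms involving $\ell$, $r$, $\phi$, $\psi$ applied to $u$ and $v$; setting it to zero is the real content. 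Grouping the coefficients of $v$ (the $V$-entry of $Y$) gives one relation and grouping the coefficients of $u$ (the $V$-entry of $X$) gives another. After polarizing (replacing $x$ by $x_1,x_2$ and using the characteristic-zero hypothesis to linearize the quadratic dependence on $x$), these two groupings reproduce precisely the bimodule axioms \eqref{rep1} and \eqref{rep4}; the symmetric computation of the right BiHom-associator yields \eqref{rep2} and \eqref{rep3}.

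**The main obstacle** will be the bookkeeping in this polarization step. The representation axioms \eqref{rep3} and \eqref{rep4} are stated in \emph{polarized} (two-variable) form, whereas Lemma~1.1 gives the alternative identity in \emph{collapsed} (one-variable, $x=x$) form; the passage between them requires that replacing $x\mapsto \beta(x)$ and the repeated argument be handled consistently, and one must track carefully which of $\alpha,\beta,\phi,\psi$ lands on $u$ versus $v$ in each monomial. I would organize the expansion into a table of the eight monomial types so that no cross-term involving both $u$ and $v$ (which cannot occur, since the product is linear in each slot and the associator is built from two products) is mistakenly retained. Concretely, the coefficient of $v$ isolates a relation among $\ell$'s and the coefficient of $u$ isolates a relation among $r$'s, matching the left/right split of the four axioms.

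To close the argument I would note that each implication is reversible: every step above is an equality of the two components of the associator, so the vanishing of the $V$-component is logically equivalent to the conjunction of the relevant bimodule axioms, and conversely assuming \eqref{rep1}--\eqref{rep4} makes both associators vanish. Hence $(V,\ell,r,\phi,\psi)$ is a bimodule if and only if the semidirect product is a BiHom-alternative algebra, which is the assertion of the proposition.
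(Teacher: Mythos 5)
The paper itself gives no proof of this proposition (it is introduced as a ``straightforward observation''), so there is nothing to compare against line by line; your direct-verification strategy --- structural compatibilities first, then the two alternativity identities via the collapsed form of Lemma 1.1, splitting the $V$-component of each associator into the coefficient of $u$ and the coefficient of $v$ --- is exactly the intended argument, and it does establish the equivalence. However, one concrete detail in your write-up is wrong: the pairing of identities with axioms is swapped. Expanding $as_{\ast}\bigl((\b+\p)(X),(\a+\f)(X),Y\bigr)$ with $X=x+u$, $Y=y+v$, the coefficient of $v$ is $\ell(\b(x)\a(x))\p(v)-\ell(\a\b(x))\ell(\a(x))(v)$, which is \eqref{rep1}, while the coefficient of $u$ is
\[
r(\b(y))\ell(\b(x))\f(u)+r(\b(y))r(\a(x))\p(u)-\ell(\a\b(x))r(y)\f(u)-r(\a(x)y)\f\p(u),
\]
whose vanishing is \eqref{rep3}, not \eqref{rep4}. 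Symmetrically, the right identity $as_{\ast}\bigl(X,(\b+\p)(Y),(\a+\f)(Y)\bigr)=0$ yields \eqref{rep2} (coefficient of $u$) and, after renaming $x\leftrightarrow y$, \eqref{rep4} (coefficient of $v$). Note that this is what your own heuristic predicts --- the coefficient of $u$ isolates the $r$-heavy relation, and \eqref{rep3} is the $r$-heavy axiom --- so your stated assignment contradicts your own bookkeeping rule. Since the proposition only needs the conjunction of all four axioms, the equivalence survives the swap, but the intermediate claims as written are false and should be corrected.

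Second, the step you flag as the main obstacle --- polarization --- does not actually arise. Because $V\ast V=0$ in the semidirect product, any monomial of the $V$-component that is linear in $u$ takes the $A$-entry $x$ from only \emph{one} of the two repeated $X$-slots; hence the coefficient of $u$ is automatically bilinear in the independent variables $x$ and $y$, and the two-variable axioms \eqref{rep3} and \eqref{rep4} drop out of the collapsed identity directly, with no linearization in $x$ required. Likewise the coefficient of $v$ has $x$ appearing twice, matching the collapsed form in which \eqref{rep1} and \eqref{rep2} are stated. The only place the characteristic hypothesis enters is Lemma 1.1 itself, invoked once on $A$ (forward direction) and once on $A\oplus V$ (converse direction); with that observation your proof closes cleanly.
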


\begin{rem}
  Let $( A,\mu,\a,\b)$ be a left BiHom-alternative algebra, define the operator multiplication $L,R:A\to gl(A)$ by $L(x)(y)=R(y)(x)=xy$ for $x,\ y$ in $
  A$. Then $(A,L,R,\a,\b)$ is a representation of $A$ called the adjoint representation.
\end{rem}

 \section{Cohomology of left BiHom-alternative algebras}

In this section, we
 construct a cochain complex that defines a BiHom-type  cohomology
of a left BiHom-alternative algebra in a $A$-module $V$.

Let $(V,\ell,r,\f,\p)$ be a representation of  a left BiHom-alternative algebras $(A,\mu, \alpha,\beta)$.
\begin{df}
 A  left BiHom-alternative $n$-cochain on $A$ with values in $V$  is a $n$-linear map $f$ from $A\times\cdots\times A$ ($n$-times) to $V$ satisfying :
\begin{eqnarray}
  \f(f(x_1,\dots,x_n)) &=&f(\alpha(x_1),\dots,\alpha(x_n)), \label{Bihom-cochain1}\\
 \p(f(x_1,\dots,x_n)) &=&f(\beta(x_1),\dots,\beta(x_n)),\label{Bihom-cochain2}
\end{eqnarray}

\end{df}
The set of left BiHom-alternative $n$-cochains on $A$ with values in $V$ is denoted by $C^n_{\a,\b,\f,\p}(A;V)$.

For $n=0$, we have $C^0_{\a,\b,\f,\p}(A;V)=V$.
\begin{df}
The coboundary operator $\delta^1: C^1_{\a,\b}(A;V)  \rightarrow C^2_{\a,\b}(A;V)$ is defined by:
\begin{equation}
 \delta^1f(x,y)= \ell(x)f(y) +r(y)f(x) -f(xy).
\end{equation}
\end{df}
We remark that the first differential of a left BiHom-alternative algebra is similar to the first differential map of Hochschild cohomology of a BiHom-associative algebra (1-cocycles are derivations) given in \cite{ApurbaDas}.
\begin{df}
The coboundary operator $\delta^2: C^2_{\a,\b}(A;V)  \rightarrow C^3_{\a,\b}(A;V)$ is defined as follow:
\begin{eqnarray*}
   \delta^2f(x,y,z)&=& r(\beta(z))f(\beta(x),\alpha(y))-
   \ell(\alpha\beta(x))f(\alpha(y),z) \\
   &+&r(\beta(z))f(\beta(y),\alpha(x))-
   \ell(\alpha\beta(y))f(\alpha(x),z)  \\
   &+&f(\beta(x)\alpha(y),\beta(z))-
   f(\alpha\beta(x),\alpha(y)z) \\
   &+&  f(\beta(y)\alpha(x),\beta(z))-
   f(\alpha\beta(y),\alpha(x)z).
\end{eqnarray*}
\end{df}

\begin{rem}
We have $\delta^2f=\delta^2f \circ \tau_{12}$, where $\tau_{12}(x,y,z)=(y,x,z),$ for all $x,y,z \in A$.
\end{rem}

\begin{lem} The operators $\delta_1$ and $\delta_2$ are well defined, that is
\begin{align*}
    & \delta^1 f \circ \a^{\otimes2}= \f  \circ \delta^1 f, \quad  \delta^1 f \circ \b^{\otimes2}= \p  \circ \delta^1 f, \\
      & \delta^2 f \circ \a^{\otimes3}= \f  \circ \delta^2 f, \quad  \delta^2 f \circ \b^{\otimes3}= \p  \circ \delta^2 f.
\end{align*}
\begin{proof}
For any $x,y,z \in A$, we have
\begin{align*}
      \d^1f \a(x,y)&=\d^1f(\a(x),\a(y))\\
      &=\ell(\a(x))f(\a(y))+r(\a(y))f(\a(x))-f(\a(x)\a(y))\\
      &=\ell(\a(x))\f(f(y))+r(\a(y))\f(f(x))-\f(f(xy))\\
      &=\f(\ell(x)f(y))+\f(r(y)f(x))-\f(f(xy))\\
      &=\f \circ \d^1f(x,y).
\end{align*}
and
\begin{align*}
    & \d^2f \circ \a(x,y,z)=\d^2f(\a(x),\a(y),\a(z))\\
=& r(\a\beta(z))f(\a\beta(x),\alpha^2(y))-
   \ell(\alpha^2\beta(x))f(\alpha^2(y),\a(z)) \\
   +&r(\a\beta(z))f(\a\beta(y),\alpha^2(x))-
   \ell(\alpha^2\beta(y))f(\alpha^2(x),\a(z))  \\
   +&f(\a\beta(x)\alpha^2(y),\a\beta(z))-
   f(\alpha^2\beta(x),\alpha^2(y)z) \\
   +&  f(\a\beta(y)\alpha^2(x),\a\beta(z))-
   f(\alpha^2\beta(y),\alpha^2(x)\a(z))\\
=& r(\a\beta(z))\f f(\beta(x),\alpha(y))-
   \ell(\alpha^2\beta(x))\f f(\alpha(y),z) \\
   +&r(\a\beta(z))\f f(\beta(y),\alpha(x))-
   \ell(\alpha^2\beta(y))\f f(\alpha(x),z)  \\
   +&\f f(\beta(x)\alpha(y),\beta(z))-
\f   f(\alpha\beta(x),\alpha(y)z) \\
   +& \f f(\beta(y)\alpha(x),\beta(z))-
 \f  f(\alpha\beta(y),\alpha(x)z)\\
  =& \f \circ \d^2f(x,y,z).
\end{align*}
The other identities can be showed using the same computation.
\end{proof}
\end{lem}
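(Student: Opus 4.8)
The plan is to verify all four identities by direct computation, since each reduces to a purely formal manipulation that uses only the compatibility of the structure maps with $\mu$, $\ell$, $r$ and $f$; the left BiHom-alternative identity \eqref{sa1} plays no role here. Three ingredients are used repeatedly: the multiplicativity and commutativity $\alpha(xy)=\alpha(x)\alpha(y)$, $\beta(xy)=\beta(x)\beta(y)$ and $\alpha\beta=\beta\alpha$; the cochain equivariance \eqref{Bihom-cochain1}--\eqref{Bihom-cochain2}, which lets us pull $\phi$ (resp. $\psi$) out of $f$ precisely when every argument carries an extra $\alpha$ (resp. $\beta$); and the intertwining relations of the representation, $\phi\ell(x)=\ell(\alpha(x))\phi$, $\phi r(x)=r(\alpha(x))\phi$ together with their $\psi,\beta$ analogues, which allow $\phi$ (resp. $\psi$) to slide out to the left past the operators $\ell$ and $r$.

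First I would treat $\delta^1$. Evaluating at $(\alpha(x),\alpha(y))$ gives $\delta^1 f(\alpha(x),\alpha(y))=\ell(\alpha(x))f(\alpha(y))+r(\alpha(y))f(\alpha(x))-f(\alpha(x)\alpha(y))$. Using \eqref{Bihom-cochain1} to replace $f(\alpha(y))=\phi f(y)$ and $f(\alpha(x))=\phi f(x)$, multiplicativity to write $\alpha(x)\alpha(y)=\alpha(xy)$ so that $f(\alpha(x)\alpha(y))=\phi f(xy)$, and then the intertwining relations $\ell(\alpha(x))\phi=\phi\ell(x)$ and $r(\alpha(y))\phi=\phi r(y)$, every summand acquires a common left factor $\phi$, yielding $\phi\bigl(\ell(x)f(y)+r(y)f(x)-f(xy)\bigr)=\phi\circ\delta^1 f(x,y)$. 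The $\beta$/$\psi$ identity is obtained verbatim after swapping $\alpha\leftrightarrow\beta$ and $\phi\leftrightarrow\psi$.

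Next I would handle $\delta^2$, proceeding term by term on the eight summands of its definition. The mechanism is identical: substituting $(\alpha(x),\alpha(y),\alpha(z))$ turns each inner argument of the shape $\gamma(u)$ with $\gamma\in\{\alpha,\beta,\alpha\beta\}$ into $\alpha\gamma(u)=\gamma\alpha(u)$ (here $\alpha\beta=\beta\alpha$ is essential to keep the labels consistent), and turns each product argument such as $\beta(x)\alpha(y)$ into $\alpha(\beta(x)\alpha(y))$ by multiplicativity. Consequently every $f(\cdots)$ gains an overall $\alpha$ on all of its arguments and becomes $\phi f(\cdots)$ by \eqref{Bihom-cochain1}, while each outer operator $r(\beta(z))$, $\ell(\alpha\beta(x))$, and so on, absorbs the extra $\alpha$ and emits a $\phi$ through the intertwining relations. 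Collecting the common factor $\phi$ reproduces $\phi\circ\delta^2 f(x,y,z)$; the $\beta$/$\psi$ version follows by the same substitution with $\alpha,\phi$ replaced by $\beta,\psi$.

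I expect the only real difficulty to be bookkeeping rather than anything conceptual: for each of the eight terms one must check that the single extra $\alpha$ produced in the arguments is matched correctly against both the cochain condition (which requires the \emph{same} map applied to \emph{all} arguments of a given $f$) and the representation relation governing the particular operator $\ell$ or $r$ standing in front, with the commutativity $\alpha\beta=\beta\alpha$ invoked to reorder composites such as $\beta\alpha(z)$ into $\alpha\beta(z)$. Once the pattern is confirmed on one representative term, say $r(\beta(z))f(\beta(x),\alpha(y))$, the remaining seven are mechanical, and the closing remark that the same argument proves the $\beta$/$\psi$ identities makes the full computation unnecessary.
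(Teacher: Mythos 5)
Your proposal is correct and follows essentially the same route as the paper: evaluate $\delta^1 f$ and $\delta^2 f$ on $\alpha$-shifted (resp.\ $\beta$-shifted) arguments, use multiplicativity of $\alpha,\beta$ and the cochain conditions \eqref{Bihom-cochain1}--\eqref{Bihom-cochain2} to extract $\phi$ (resp.\ $\psi$) from each $f(\cdots)$, and then slide it out to the left past $\ell$ and $r$ via the intertwining relations of the representation. The paper simply writes out these two computations term by term (and, like you, dismisses the remaining identities as identical in form), so there is no substantive difference between the two arguments.
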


\begin{prop}
The composite $\delta^2\circ \delta^1$ is zero.
\end{prop}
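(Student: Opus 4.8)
The plan is to substitute $g=\delta^1 f$ into the defining formula for $\delta^2$ and check that everything cancels, organizing the bookkeeping by the shape of each summand. First I would expand the eight terms of $\delta^2(\delta^1 f)(x,y,z)$ by means of $\delta^1 f(a,b)=\ell(a)f(b)+r(b)f(a)-f(ab)$, which produces twenty-four summands; throughout, the $1$-cochain conditions (\ref{Bihom-cochain1})--(\ref{Bihom-cochain2}), i.e. $f(\a(u))=\f f(u)$ and $f(\b(u))=\p f(u)$, are used to pull $\a,\b$ out of $f$ and convert them into $\f,\p$ acting on $V$.

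Next I would sort the twenty-four summands into families according to the argument that sits inside $f$. The \emph{mixed} terms, where a single module operator is applied to $f$ of a product, cancel in pairs at once and without any axiom: for example the summand $-r(\b(z))f(\b(x)\a(y))$ from the first term cancels the summand $+r(\b(z))f(\b(x)\a(y))$ from the fifth term, and likewise for the three remaining pairs. The \emph{inner} terms, in which $f$ is applied to a doubly nested product with no operator in front, arise only from the fifth through eighth terms, where a product occupies a slot of $g$; by linearity of $f$ they assemble into $-f\bigl(as_{\a,\b}(\b(x),\a(y),z)+as_{\a,\b}(\b(y),\a(x),z)\bigr)$, which vanishes by the left BiHom-alternative identity (\ref{sa1}).

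The remaining twelve summands are each a composite of two module operators applied to $f$ of a single variable, and they split into three groups according to whether $x$, $y$ or $z$ occurs inside $f$. Writing $u=f(z)$, the $z$-group is $-\ell(\a\b(x))\ell(\a(y))u-\ell(\a\b(y))\ell(\a(x))u+\ell(\b(x)\a(y))\p u+\ell(\b(y)\a(x))\p u$, which is exactly the difference of the two sides of the polarization $x\mapsto x+y$ of axiom (\ref{rep1}), hence zero. With $v=f(x)$, the $x$-group is $r(\b(z))r(\a(y))\p v+r(\b(z))\ell(\b(y))\f v-\ell(\a\b(y))r(z)\f v-r(\a(y)z)\f\p v$; replacing its middle two terms through axiom (\ref{rep3}) (relabeled $x\mapsto y,\ y\mapsto z$) by $r(\a(y)z)\f\p v-r(\b(z))r(\a(y))\p v$ leaves exactly the negatives of the two outer terms, so the group collapses to $0$. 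The $y$-group is obtained from the $x$-group by the symmetry $x\leftrightarrow y$ and vanishes in the same way.

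I would expect the only real difficulty to be organizational: enumerating the twenty-four terms without error, keeping every sign and every placement of $\a,\b,\f,\p$ straight, and confirming that the families are exhaustive and disjoint. No conceptually hard step appears, since once the terms are correctly separated the mixed terms cancel formally, the inner terms are governed by (\ref{sa1}), and the operator terms are precisely the content of the polarized (\ref{rep1}) and of (\ref{rep3}). It is worth noting that the right-hand axioms (\ref{rep2}) and (\ref{rep4}) play no role, in keeping with $\delta^2$ being built from the left BiHom-alternative structure.
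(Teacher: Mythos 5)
Your proposal is correct and follows essentially the same route as the paper's proof: expand the eight terms of $\delta^2(\delta^1 f)$, cancel the mixed terms in pairs, collapse the nested terms to $-f\bigl(as_{\a,\b}(\b(x),\a(y),z)+as_{\a,\b}(\b(y),\a(x),z)\bigr)$ via the left BiHom-alternative identity, and kill the twelve operator terms using \eqref{rep1} and \eqref{rep3}. The only (welcome) refinement is that you state explicitly that \eqref{rep1} must be used in its polarized form $x\mapsto x+y$, a point the paper's proof leaves implicit.
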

\begin{proof}

 Let $x, \ y,\  z \in A$ and $f \in C^1_{\alpha, \beta}(A,V)$. Then
\begin{align}
 & \delta^2\circ \delta^1f(x,y,z)= \nonumber \\
  &r(\b(z))\delta^1f(\b(x),\a(y))-
 \ell( \alpha\beta(x))\delta^1f(\alpha(y),z)
   +r(\beta(z))\delta^1f(\beta(y),\alpha(x))-\ell(\alpha\beta(y))\delta^1f(\alpha(x),z)  \nonumber\\
   +& \delta^1f(\beta(x)\alpha(y),\beta(z))-
   \delta^1f(\alpha\beta(x),\alpha(y)z)
+  \delta^1f(\beta(y)\alpha(x),\beta(z))-
   \delta^1f(\alpha\beta(y),\alpha(x),z)  \nonumber\\
=&r(\b(z))\ell(\b(x))f(\a(y))+r(\b(z))r(\a(y))f(\b(x))-r(\b(z))f(\b(x)\a(y))   \nonumber\\
-&   \ell(\a\b(x))\ell(\a(y))f(z)-\ell(\a\b(x))r(z)f(\a(y))+\ell(\a\b(x))f(\a(y)z)  \nonumber\\
+&r(\b(z))\ell(\b(y))f(\a(x))+r(\b(z))r(\a(x))f(\b(y))-r(\b(z))f(\b(y)\a(x))  \nonumber\\
-&   \ell(\a\b(y))\ell(\a(x))f(z)-\ell(\a\b(y))r(z)f(\a(x))+\ell(\a\b(y))f(\a(x)z)  \nonumber\\
+&\ell(\b(x)\a(y))f(\b(z))+r(\b(z))f(\b(x)\a(y))-f((\b(x)\a(y))\b(z)) \nonumber \\
-& \ell(\a\b(x))f(\a(y)z)-r(\a(y)z)f(\a\b(x))+f(\a\b(x)(\a(y)z))  \nonumber\\
+&\ell(\b(y)\a(x))f(\b(z))+r(\b(z))f(\b(y)\a(x))-f((\b(y)\a(x))\b(z))  \nonumber\\
-& \ell(\a\b(y))f(\a(x)z)-r(\a(x)z)f(\a\b(y))+f(\a\b(y)(\a(x)z))  \nonumber\\
=&r(\b(z))\ell(\b(x))f(\a(y))+r(\b(z))r(\a(x))f(\b(y)) \label{1} \\
-&\ell(\a\b(x))r(z)f(\a(y))  -r(\a(x)z)f(\a\b(y))  \label{2} \\
  +&r(\b(z))\ell(\b(y))f(\a(x))  +  r(\b(z))r(\a(y))f(\b(x))  \label{3} \\
-&\ell(\a\b(y))r(z)f(\a(x))  -r(\a(y)z)f(\a\b(x))   \label{4}  \\
+&\ell(\b(x)\a(y))f(\b(z)) -\ell(\a\b(x))\ell(\a(y))f(z)   \label{5} \\
+&\ell(\b(y)\a(x))f(\b(z))  -  \ell(\a\b(y))\ell(\a(x))f(z)    \label{6}  \\
-&f(as_{\a,\b}(\b(x),\a(y),z)+as_{\a,\b}(\b(y),\a(x),z)   \label{7}
\end{align}
We have \eqref{1}+\eqref{2}=$0$ and \eqref{3}+\eqref{4}=$0$, using \eqref{rep3}. Moreover,
\eqref{5}+ \eqref{6}=$0$, using \eqref{rep1}.  In addition, \eqref{7}=$0$, since $A$ is left BiHom-alternative.

Then, we get $\d^2 \circ \d^1=0$
\end{proof}

\begin{df}
The coboundary operator $\delta^3: C^3_{\a,\b}(A;V)  \rightarrow
C^4_{\a,\b,}(A;V)$ is defined by
\begin{align*}
 & \delta^3f(x_1,x_2,x_3,x_4) =
\ell(\a(x_1)) f(\b(x_2),\b( x_3),\b(x_4))-
\ell(\a(x_1)) f(\b(x_3), \b(x_2),\b(x_4)) \\
+ & r(\b( x_4)) f(\a(x_1), \a(x_2), \a(x_3)) -
r(\b(x_4))f(\a(x_2), \a(x_1),\a(x_3))   \\
- &f(\a(x_1)\b(x_2), x_3, x_4 )-
f( \a(x_2)  \b(x_3), x_1, x_4 )
 +f (x_1, \a(x_2) \b(x_3), x_4 )\\
 +&
 f(x_3, a(x_1)\b(x_2),  x_4 )
-f( x_1, x_2, \a(x_3)\b(x_4) )+
f( x_2, x_1, \a(x_3)\b(x_4) ).
\end{align*}
\end{df}
\begin{lem}  The operator $\delta^3$ is well defined, that is
$$\d^3f \circ \a^{\otimes4} =\f \circ \d^3f\  \textrm{and}\  \d^3f \circ \b^{\otimes4} =\p \circ \d^3f.$$
\end{lem}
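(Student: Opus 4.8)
The plan is to verify the two identities by direct substitution, writing $\delta^3 f$ as a sum of its ten terms and checking that each term transforms in the required way; this is structurally identical to the computation already performed for $\delta^1$ and $\delta^2$ in the previous lemma. I would present only the identity $\delta^3 f \circ \alpha^{\otimes 4} = \phi \circ \delta^3 f$ in detail, since the identity involving $\beta$ and $\psi$ follows mutatis mutandis, with $\alpha$, $\phi$, and the cochain condition \eqref{Bihom-cochain1} replaced throughout by $\beta$, $\psi$, and \eqref{Bihom-cochain2}.

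First I would substitute $(\alpha(x_1),\alpha(x_2),\alpha(x_3),\alpha(x_4))$ into the defining formula and normalize each argument using the multiplicativity relation $\alpha\mu=\mu\alpha^{\otimes 2}$ together with the commutation $\alpha\beta=\beta\alpha$. For the four operator terms this only shifts the arguments of $f$: for example $\ell(\alpha(x_1)) f(\beta(x_2),\beta(x_3),\beta(x_4))$ becomes $\ell(\alpha^2(x_1)) f(\alpha\beta(x_2),\alpha\beta(x_3),\alpha\beta(x_4))$, and $r(\beta(x_4)) f(\alpha(x_1),\alpha(x_2),\alpha(x_3))$ becomes $r(\alpha\beta(x_4)) f(\alpha^2(x_1),\alpha^2(x_2),\alpha^2(x_3))$. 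For the six product terms, the key move is to absorb the extra $\alpha$ into the product, e.g. $\alpha^2(x_1)\,\alpha\beta(x_2)=\alpha(\alpha(x_1)\beta(x_2))$ by multiplicativity, so that $f(\alpha(x_1)\beta(x_2),x_3,x_4)$ turns into $f(\alpha(\alpha(x_1)\beta(x_2)),\alpha(x_3),\alpha(x_4))$, i.e. $f$ evaluated at fully $\alpha$-shifted arguments.

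The second step is to pull $\phi$ out of each term. On the two $\ell$-terms and the two $r$-terms I would use the representation compatibilities $\phi\ell(x)=\ell(\alpha(x))\phi$ and $\phi r(x)=r(\alpha(x))\phi$, applied at the arguments $\alpha(x_1)$ and $\beta(x_4)$ respectively, giving $\ell(\alpha^2(x_1))\phi=\phi\,\ell(\alpha(x_1))$ and $r(\alpha\beta(x_4))\phi=\phi\,r(\beta(x_4))$; the $\alpha$-shifted arguments of $f$ in these terms are then collapsed to a factor of $\phi$ using \eqref{Bihom-cochain1}. On the six product terms, a single application of \eqref{Bihom-cochain1} suffices, since after the normalization of the first step $f$ is already evaluated at a common $\alpha$-shift of its arguments. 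Collecting the ten contributions reproduces exactly $\phi$ applied to the original ten terms of $\delta^3 f(x_1,x_2,x_3,x_4)$, which is $\phi\circ\delta^3 f(x_1,x_2,x_3,x_4)$.

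I expect no genuine obstacle here: the statement is a well-definedness check and the computation is routine. The only real work is bookkeeping — keeping track, term by term, of which compatibility relation and which form of the cochain condition is invoked, and in particular handling the product arguments $\alpha(x_i)\beta(x_j)$ by first invoking $\alpha\mu=\mu\alpha^{\otimes 2}$ and $\alpha\beta=\beta\alpha$ before applying \eqref{Bihom-cochain1}. Since each of these manipulations is identical in structure to those already displayed in the proof of well-definedness of $\delta^2$, the verification for $\beta$ and $\psi$ is obtained by the same computation and may simply be indicated rather than written out.
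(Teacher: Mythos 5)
Your proposal is correct and follows exactly the route the paper intends: the paper's own proof of this lemma is just the remark ``we can check these identities by a direct computation,'' and your term-by-term verification (normalize arguments via $\alpha\mu=\mu\alpha^{\otimes 2}$ and $\alpha\beta=\beta\alpha$, pull out $\phi$ via the cochain condition, then commute it past $\ell$ and $r$ using $\phi\ell(x)=\ell(\alpha(x))\phi$ and $\phi r(x)=r(\alpha(x))\phi$) is precisely the same pattern the paper displays explicitly for $\delta^1$ and $\delta^2$ in the preceding lemma. Your handling of the mixed product arguments $\alpha(x_i)\beta(x_j)$ and the mutatis mutandis treatment of the $\beta$, $\psi$ identity are both sound, so nothing is missing.
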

\begin{proof}
We can check these identities by a direct computation.
\end{proof}
\begin{prop}
The composite $\delta ^{3}\circ \delta ^{2}$ is zero.
\end{prop}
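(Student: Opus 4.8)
The plan is to establish $\delta^3\circ\delta^2=0$ by a direct expansion that mirrors, one level up, the argument already used for $\delta^2\circ\delta^1=0$. First I would fix a $2$-cochain $f\in C^2_{\a,\b}(A;V)$ and four elements $x_1,x_2,x_3,x_4\in A$, then substitute the defining formula for $\delta^2 f$ into each of the ten summands of $\delta^3(\delta^2 f)(x_1,x_2,x_3,x_4)$. Since $\delta^2 f$ evaluated on any triple is itself a sum of eight terms, this produces on the order of eighty summands, a mixture of terms carrying an outer operator $\ell$ or $r$ and terms in which $f$ is applied to a product of products.

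Before collecting, I would normalise every summand so that all twists by $\a,\b$ sit inside the arguments of $f$ in the canonical pattern $f(\b(\cdot),\a(\cdot))$, pushing the superfluous $\a$'s and $\b$'s coming from the inner $\delta^2$ through $f$ by means of the cochain conditions \eqref{Bihom-cochain1} and \eqref{Bihom-cochain2}, and simplifying products with the multiplicativity relations $\a\mu=\mu\a^{\o2}$, $\b\mu=\mu\b^{\o2}$ together with the commutation $\a\b=\b\a$. This step is what converts an expression such as $r(\b\b(x_4))f(\b\b(x_2),\a\b(x_3))$ into a $\p$-twisted instance of $f$ on the elements themselves, and it is essential for making the terms literally comparable.

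Next I would sort the normalised terms into homogeneous groups according to their leading operator structure: pure $\ell\ell$ terms, pure $rr$ terms, the mixed $\ell r$ and $r\ell$ terms, and the terms in which $f$ is applied to an associator-type combination of products. The mixed groups cancel using the compatibility axioms \eqref{rep3} and \eqref{rep4}, the $\ell\ell$ and $rr$ groups cancel using \eqref{rep1} and \eqref{rep2} (supplemented by \eqref{rep3}, \eqref{rep4}), and the purely $f$-valued terms should reorganise into $f$ evaluated on alternator combinations of the type $as_{\a,\b}(\b(x_i),\a(x_j),x_k)+as_{\a,\b}(\b(x_j),\a(x_i),x_k)$, each of which vanishes by the left BiHom-alternative identity \eqref{sa1}. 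The built-in antisymmetry of $\delta^3$ under $x_1\leftrightarrow x_2$ and under $x_3\leftrightarrow x_4$, combined with the symmetry $\delta^2 f=\delta^2 f\circ\tau_{12}$ recorded in the preceding remark, should be exploited to pair terms so that each group is manifestly a telescoping or an alternating sum.

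The main obstacle is not conceptual but organisational: with roughly eighty twisted terms, the bookkeeping needed to match each term with its cancelling partner is delicate, and a single misplaced $\a$, $\b$, $\f$ or $\p$ will break a cancellation. I would therefore keep the twists symbolic throughout and verify, group by group, that the arguments of paired terms coincide exactly after applying the multiplicativity and cochain identities, treating the four representation axioms and the alternativity identity \eqref{sa1} as the only substantive inputs and regarding everything else as formal rewriting.
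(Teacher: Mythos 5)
Your proposal misidentifies what actually makes this proposition true, and the cancellation mechanism it relies on is not the one that operates; this is a genuine gap, not merely a longer route. The paper's proof needs no expansion at all: by the Remark preceding the definition of $\delta^3$, one has $\delta^2 f=\delta^2 f\circ\tau_{12}$ for every $f\in C^2_{\alpha,\beta}(A;V)$ (the symmetry in the first two arguments is manifest from the definition of $\delta^2$), and $\delta^3$ annihilates \emph{every} $\tau_{12}$-symmetric cochain $g$, because its ten summands regroup into five differences of the form $g(a,b,c)-g(b,a,c)$:
\begin{align*}
\delta^3 g(x_1,x_2,x_3,x_4)&=\ell(\alpha(x_1))\bigl(g(\beta(x_2),\beta(x_3),\beta(x_4))-g(\beta(x_3),\beta(x_2),\beta(x_4))\bigr)\\
&\quad+r(\beta(x_4))\bigl(g(\alpha(x_1),\alpha(x_2),\alpha(x_3))-g(\alpha(x_2),\alpha(x_1),\alpha(x_3))\bigr)\\
&\quad+\bigl(g(x_3,\alpha(x_1)\beta(x_2),x_4)-g(\alpha(x_1)\beta(x_2),x_3,x_4)\bigr)\\
&\quad+\bigl(g(x_1,\alpha(x_2)\beta(x_3),x_4)-g(\alpha(x_2)\beta(x_3),x_1,x_4)\bigr)\\
&\quad+\bigl(g(x_2,x_1,\alpha(x_3)\beta(x_4))-g(x_1,x_2,\alpha(x_3)\beta(x_4))\bigr).
\end{align*}
Hence $\delta^3(\delta^2 f)=0$ identically, with no use of the representation axioms \eqref{rep1}--\eqref{rep4}, no use of the left BiHom-alternative identity \eqref{sa1}, and no eighty-term bookkeeping.

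The concrete flaw in your plan is the claim that the mixed $\ell r$ groups cancel via \eqref{rep3}--\eqref{rep4}, the $\ell\ell$ and $rr$ groups via \eqref{rep1}--\eqref{rep2}, and the $f$-valued terms via \eqref{sa1}, these being ``the only substantive inputs.'' None of these cancellations exists to be found: if you do expand, every one of the roughly eighty summands cancels against a \emph{literally identical} summand of opposite sign coming from the $\tau_{12}$-partner of its parent term (for instance, the eight sub-terms of $-\delta^2f(\alpha(x_1)\beta(x_2),x_3,x_4)$ are, sub-term by sub-term, the negatives of the eight sub-terms of $+\delta^2f(x_3,\alpha(x_1)\beta(x_2),x_4)$). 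No associator combination $as_{\alpha,\beta}$ ever forms inside $f$, so \eqref{sa1} is never applicable, and neither are the representation axioms nor the normalization step you describe. The analogy you draw with $\delta^2\circ\delta^1=0$ --- where \eqref{rep1}, \eqref{rep3} and \eqref{sa1} genuinely are needed --- breaks down one level up, precisely because $\delta^3$ is built to kill the image of $\delta^2$ by symmetry alone; relatedly, the ``built-in antisymmetry of $\delta^3$ under $x_1\leftrightarrow x_2$ and $x_3\leftrightarrow x_4$'' you invoke is not a property $\delta^3$ has (its first pair of terms, for example, is not antisymmetric under $x_1\leftrightarrow x_2$): the only symmetry that matters is that of the argument $\delta^2 f$. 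A faithful execution of your roadmap would therefore stall hunting for axiom applications that never materialize; an honest completion of the expansion does end in zero, but at that point all the grouping and appeals to \eqref{rep1}--\eqref{rep4} and \eqref{sa1} are dead weight, and what remains is exactly the pairing argument above.
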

\begin{proof}
Let $x_1,x_2,x_3,,x_4 \in A$ and  $f \in \mathcal{C}^{2}({A ;V})$,
Then, by substituting $f$ with $\delta ^{2}f$ in the previous formula and rearranging the terms we get
\begin{align*}
& \delta^3(\d^2f)(x_1,x_2,x_3,x_4) =
\ell(\a(x_1) ) \d^2f(\b(x_2),\b( x_3),\b(x_4))-
\ell(\a(x_1)) \d^2f(\b(x_3), \b(x_2),\b(x_4)) \\
&
+r(\b( x_4) )\d^2f(\a(x_1), \a(x_2), \a(x_3)) -
r(\b(x_4)) \d^2f(\a(x_2), \a(x_1),\a(x_3))   \\
&
-\d^2f( \a(x_1)\b(x_2), x_3, x_4 )-
\d^2f( \a(x_2)\b(x_3), x_1, x_4 )\\
&
 +\d^2f (x_1, \a(x_2)\b(x_3), x_4 )+
\d^2 f(x_3, \a(x_1)\b(x_2),  x_4 )\\
&
-\d^2f( x_1, x_2, \a(x_3)\b(x_4) )+
\d^2f( x_2, x_1, \a(x_3) \b(x_4) ).\\
&= \ell(\a(x_1) )(\d^2f(\b(x_2),\b( x_3),\b(x_4))-\d^2f(\b(x_3), \b(x_2),\b(x_4)))\\
&+r(\b(x_4)) ( \d^2f(\a(x_1), \a(x_2), \a(x_3)-\d^2f(\a(x_2), \a(x_1),\a(x_3))  \\
& +\d^2 f(x_3, \a(x_1)\b(x_2),  x_4 )--\d^2f( \a(x_1)\b(x_2), x_3, x_4 )\\
&+\d^2f (x_1, \a(x_2)\b(x_3), x_4 )-\d^2f( \a(x_2) \b(x_3), x_1, x_4 )\\
&+\d^2f( x_2, x_1,\a(x_3) \b(x_4) ).  -\d^2f( x_1, x_2,\a(x_3) \b(x_4))\\
&=0 \quad\ (\text{since}\  \delta^2f=\delta^2f \circ \tau_{12}).
\end{align*}

\end{proof}
One can complete the complex by considering $\d^p=0$, for $p>3$.
For $n=1,2,3$, the map $f \in C_{\alpha,\beta}^n(A;V)$ is called a $n$-BiHom-cocycle (or simply a $n$-cocycle), if $\delta^nf=0$.
We denote by $Z_{\alpha,\beta}^n(A;V)$ the subspace spanned by $n$-BiHom-cocycles  and by $B_{\alpha,\beta}^n(A;V)=\delta^{n-1}(C_{\alpha,\beta}^{n-1}(A;V))$.
Since $\delta^2\circ \delta^1=0$ and  $\delta^3\circ \delta^2=0$, then $B^2_{\alpha,\beta}(A;V)$ is a subspace of $Z_{\alpha,\beta}^2(A;V)$ and
$B^3_{\alpha,\beta}(A;V)$ is a subspace of $Z_{\alpha,\beta}^3(A;V)$.
Hence, we can define  cohomology spaces of $(A,\mu,\alpha,\b)$ as
\begin{align*}
&H_{\a,\b}^2(A;V)=
\frac{Z_{\a,\b}^2(A;V)}{B^2_{\a,\b}(A;V)},
\qquad H_{\a,\b}^3(A;V)= \frac{Z_{\a,\b}^3(A;V)}{B^3_{\a,\b}(A;V)}.
\end{align*}

 \section{Deformations of left BiHom-alternative algebras}
We develop, in this section, a deformation theory for left BiHom-alternative algebras by analogy with Gerstenhaber
deformations \cite{Gerstenhaber1, Gerstenhaber2, Gerstenhaber3, Gerstenhaber4}. Heuristically, a formal deformation of an algebra $A$ is a
one-parameter family of multiplication (of the same sort) obtained by perturbing the multiplication of $A$.
\subsection{Definition}
Suppose that $(A,\mu, \alpha,\beta)$ is a  left BiHom-alternative algebra. Let $\mathbb{K}[[t]]$ be the ring of formal power
series over $\mathbb{K}$. Suppose that $A[[t]]$ is the set of formal power series over $A$. Then for a $\mathbb{K}$-bilinear map $f:A\times A \rightarrow A$, it is natural to extend it to be a $\mathbb{K}[[t]]$-bilinear map
$f:A[[t]]\times A[[t]]\rightarrow A[[t]]$ by
$$
f\bigg(\displaystyle\sum_{i\geq0}x_it^i,\displaystyle\sum_{j\geq0}y_jt^j\bigg)=\displaystyle\sum_{i,j\geq0}f(x_i,y_j)t^{i+j}.
$$
\begin{df}

Suppose that $(A,\mu, \alpha,\beta)$ is a left   BiHom-alternative algebra. A one-parameter
formal deformation of $(A,\mu, \alpha,\beta)$ is a formal power series $d_t:A[[t]]\times A[[t]]\rightarrow A[[t]]$ of the form
$$
d_t(x,y)=\displaystyle\sum_{i\geq0}d_i(x,y)t^i=d_0(x,y)+d_1(x,y)t + d_2(x,y)t^2 +\dots,$$
where each $d_i$ is a $\mathbb{K}$-bilinear map $d_i:A\times A\rightarrow A$ (extended to be $\mathbb{K}[[t]]$-bilinear)
and $d_0(x,y)=\mu(x,y)$, such that the following identities must be
satisfied:
\begin{eqnarray}
&&d_t(\alpha(x),\alpha(y))= \alpha\circ d_t(x, y),\label{deformation1}\\
&&d_t(\beta(x),\beta(y))= \beta\circ d_t(x,y),\label{deformation2}\\
&&d_t(d_t(\b(x),\a(y)),\b(z))-d_t(\a\b(x),d_t(\a(y),z))\nonumber\\
&&+d_t(d_t(\b(y),\a(x)),\b(z))-d_t(\a\b(y),d_t(\a(x),z))=0\label{deformation3}.
\end{eqnarray}
Conditions \eqref{deformation1}-\eqref{deformation3} are called  deformation equations of a  left BiHom-alternative algebra.

\end{df}

Note that $A[[t]]$ is a module over $\mathbb{K}[[t]]$ and $d_t$ defines the bilinear multiplication on
$A[[t]]$ such that $A_t=(A[[t]], d_t, \alpha,\beta)$ is a  left BiHom-alternative algebra.

\subsection{Deformation equations and Obstructions}
Now we investigate the deformation equations \eqref{deformation1}-\eqref{deformation3}.
Conditions \eqref{deformation1} and \eqref{deformation2} are equivalent to the following equations:
\begin{gather}
d_i(\alpha(x),\alpha(y))=\alpha\circ d_i(x,y),\label{deformation1.1}\\
d_i(\beta(x),\beta(y))=\beta\circ d_i(x,y).\label{deformation1.2},
\end{gather}
respectively, for $i=0,1,2,\cdots.$  Expanding the
left side of the equation \eqref{deformation3} and collecting the coefficients of $t^k$, it
yields
\begin{align*}
\sum_{i+j=k,i,j\geq0}&\big(d_i(d_j(\b(x),\a(y)),\b(z))-d_i(\a\b(x),d_j(\a(y),z))\\
&+d_i(d_j(\b(y),\a(x)),\b(z))-d_i(\a\b(y),d_j(\a(x),z))\big)=0,\; k=0,1,2\cdots.
\end{align*}
This equation  gives the necessary and sufficient conditions for the
 left BiHom-alternativity of $d_t$. It may be written
\begin{align}\label{deformation 4}
\sum_{i=0}^k &\big(d_i(d_{k-i}(\b(x),\a(y)),\b(z))-d_i(\a\b(x),d_{k-i}(\a(y),z)) \nonumber\\
&+d_i(d_{k-i}(\b(y),\a(x)),\b(z))-d_i(\a\b(y),d_{k-i}(\a(x),z))\big)=0.
\end{align}
The first equation $(k = 0)$ is the left BiHom-alternative condition for $\mu_0$.
For $k=1$, we obtain the following equation:
\begin{align*}
  &   \mu(d_1(\beta(x),\alpha(y)),\beta(z))-
   \mu(\alpha\beta(x),d_1(\alpha(y),z))
   +\mu(d_1(\beta(y),\alpha(x)),\beta(z))-\\
  & \mu(\alpha\beta(y),d_1(\alpha(x),z))
   +d_1(\mu(\beta(x),\alpha(y)),\beta(z))-
 \  d_1(\alpha\beta(x),\mu(\alpha(y),z)) \\
  &\qquad +  d_1(\mu(\beta(y),\alpha(x)),\beta(z))-
   d_1(\alpha\beta(y),\mu(\alpha(x),z))=0.
\end{align*}
This means that  $d_1$ must be a $2$-cocycle for the left BiHom-alternative algebra cohomology defined above that is  $d_1\in Z_{\a,\b}^2(A,A)$.
More generally, suppose that $d_p$ be the first non-zero coefficient after $\mu_0$ in the deformation $d_t$. This $d_p$ is
called the infinitesimal of $d_t$
\begin{thm}
The map $d _p$  is a $2$-cocycle of a left BiHom-alternative algebra
cohomology of $A$  with coefficient in itself.
\end{thm}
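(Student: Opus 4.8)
The plan is to use the defining deformation equation \eqref{deformation 4} for the index $k=p$, where $d_p$ is by assumption the first nonzero coefficient after $\mu_0$, i.e. $d_0=\mu$ and $d_1=\dots=d_{p-1}=0$. First I would write out that equation explicitly:
\begin{align*}
\sum_{i=0}^p &\big(d_i(d_{p-i}(\b(x),\a(y)),\b(z))-d_i(\a\b(x),d_{p-i}(\a(y),z)) \\
&+d_i(d_{p-i}(\b(y),\a(x)),\b(z))-d_i(\a\b(y),d_{p-i}(\a(x),z))\big)=0.
\end{align*}
In this sum every term contains a factor $d_i$ and a factor $d_{p-i}$; since $d_j=0$ for $1\le j\le p-1$, a summand can survive only if one of the two indices $i,\,p-i$ equals $0$ (so the other equals $p$). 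Thus the only surviving contributions come from $i=0$ (giving $d_0=\mu$ on the outside and $d_p$ on the inside) and from $i=p$ (giving $d_p$ on the outside and $d_0=\mu$ on the inside). All intermediate terms vanish identically.

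Next I would collect these surviving terms. The $i=0$ contributions read
\begin{align*}
&\mu(d_p(\b(x),\a(y)),\b(z))-\mu(\a\b(x),d_p(\a(y),z))\\
&+\mu(d_p(\b(y),\a(x)),\b(z))-\mu(\a\b(y),d_p(\a(x),z)),
\end{align*}
and the $i=p$ contributions read
\begin{align*}
&d_p(\mu(\b(x),\a(y)),\b(z))-d_p(\a\b(x),\mu(\a(y),z))\\
&+d_p(\mu(\b(y),\a(x)),\b(z))-d_p(\a\b(y),\mu(\a(x),z)).
\end{align*}
The crucial observation is that this is exactly the expression $\delta^2 d_p(x,y,z)$ given by the definition of the second coboundary operator, where the representation is the adjoint one $(A,L,R,\a,\b)$ from the Remark (so $\ell=L$, $r=R$, $\phi=\a$, $\psi=\b$, and $L(u)v=\mu(u,v)=R(v)u$). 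Writing $\mu$ as concatenation, the first block becomes $R(\b(z))d_p(\b(x),\a(y))-L(\a\b(x))d_p(\a(y),z)+\dots$ and the second block is the remaining four $d_p$-terms, matching the eight-term formula for $\delta^2$ term by term. Hence the $k=p$ deformation equation is precisely $\delta^2 d_p=0$, i.e. $d_p\in Z^2_{\a,\b}(A;A)$.

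Before concluding I would also verify that $d_p$ is a genuine cochain, i.e. that $d_p\in C^2_{\a,\b,\a,\b}(A;A)$: this follows from the compatibility conditions \eqref{deformation1.1} and \eqref{deformation1.2}, which for $i=p$ give $d_p(\a(x),\a(y))=\a\,d_p(x,y)$ and $d_p(\b(x),\b(y))=\b\,d_p(x,y)$, exactly the cochain conditions \eqref{Bihom-cochain1}--\eqref{Bihom-cochain2} with $\f=\a,\ \p=\b$. The only real subtlety, and the step I would treat most carefully, is the bookkeeping that shows all mixed terms with $1\le i\le p-1$ drop out and that the surviving terms assemble into $\delta^2 d_p$ with the correct signs and the correct placement of $\a,\b$; once that identification is made the result is immediate. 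I therefore expect no serious obstacle beyond this careful term-matching.
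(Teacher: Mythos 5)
Your proof is correct and follows exactly the paper's own argument: the paper's proof is precisely the substitution $k=p$, $d_1=\cdots=d_{p-1}=0$ in equation \eqref{deformation 4}, and you have simply spelled out the details (vanishing of the intermediate terms, identification of the surviving $i=0$ and $i=p$ blocks with $\delta^2 d_p$ for the adjoint representation, and the cochain conditions from \eqref{deformation1.1}--\eqref{deformation1.2}). No discrepancy with the paper's approach.
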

\begin{proof}
In the equation \eqref{deformation 4} make the following substitution $k=p$ and $d_1=\cdots =d _{p-1}=0$.
\end{proof}
\begin{df}
Let $(A,\mu,\a,\b)$ be a left BiHom-alternative algebra and $d_1$ be an element in $Z^2_{\a,\b}(A,A)$.  The $2$-cocycle  $d_1$ is said integrable if there exists a family $(d_i)_{i \geq 0}$ such that $d_t=\sum_{i\geq 0} d_i t^i$ defines a formal deformation $A_t=(A[[t]],d_t,\a,\b)$ of $A$.
\end{df}
The integrability of $d _p$ implies an infinite sequence of relations
which may be interpreted as the vanishing of the obstruction to the
integration of $d_p$.

For an arbitrary $k$, with $k>1,$ the $k^{th}$ equation of the system \eqref{deformation 4} may be written%
\begin{eqnarray*}
\delta ^2d_k\left( x,y,z\right) &=& \sum_{i=1}^{k-1}d _i\left( d
_{k-i}\left( \b(x), \a(y)\right)  \b(z)\right) -d _i\left( \a\b(x) , d _{k-i}\left(
\a(y), z\right) \right)\\
&+&d _i\left( d
_{k-i}\left( \b(y),\a(x)\right), \b(z)\right) -d _i\left( \a\b(y), d_{k-i}\left(
\a(x), z\right) \right).
\end{eqnarray*}
Suppose that the truncated deformation  of order $m-1$
$$d _t=d _0+td_1+t^2d_2+\cdots +t^{m-1}d _{m-1}$$
satisfies the deformation equation. The truncated
deformation is extended to a deformation of order $m$,  $d_t=d
_0+td _1+t^2d_2+ \cdots +t^{m-1}d _{m-1}+t^md _m$, satisfying   the
deformation equation if
\begin{eqnarray*}
\delta ^2d_m\left( x,y,z\right) &=&   \sum_{i=1}^{m-1}d _i\left( d
_{m-i}\left( \b(x), \a(y)\right)  \b(z)\right) -d _i\left( \a\b(x) , d _{m-i}\left(
\a(y), z\right) \right)\\
&+&d _i\left( d
_{m-i}\left( \b(y),\a(x)\right), \b(z)\right) -d _i\left( \a\b(y), d_{m-i}\left(
\a(x), z\right) \right).
\end{eqnarray*}

The right-hand side of this equation is called the {\it obstruction }to
finding $d _m$ extending the deformation.

We define  a  diamond operation on $2$-cochains by
\begin{align*}
 &  d _i\diamond d _j\left( x, y, z\right) =d _i\left( d
_j\left( \b(x), \a(y)\right)  \b(z)\right) -d _i\left( \a\b(x) , d _j\left(
\a(y), z\right) \right)\\
& +d _i\left( d_j\left( \b(y),\a(x)\right), \b(z)\right) -d _i\left( \a\b(y), d_j \left(\a(x), z\right) \right).
\end{align*}
Then  the obstruction may be written
$$
\sum_{i=1}^{m-1}{d_i\diamond d _{m-i}}\text{ or }
\sum_{ i+j=m \ \ i,j\neq m}{ d _i\diamond d _j }.
$$

\noindent
A straightforward computation gives the following result.
\begin{thm}
The obstructions are left BiHom-alternative 3-cocycles.
\end{thm}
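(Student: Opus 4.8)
The plan is to prove directly that $\delta^3$ kills the obstruction, writing $\mathrm{Ob}_m:=\sum_{i+j=m,\ i,j\ge 1}d_i\diamond d_j$ and exploiting that the truncated deformation of order $m-1$ already satisfies \eqref{deformation 4}. The first observation is that the diamond records exactly the left BiHom-alternativity defect: for a single bilinear map $\nu$ one has $\nu\diamond\nu(x,y,z)=\nu(\nu(\b(x),\a(y)),\b(z))-\nu(\a\b(x),\nu(\a(y),z))+\nu(\nu(\b(y),\a(x)),\b(z))-\nu(\a\b(y),\nu(\a(x),z))$, which is precisely $as_{\a,\b}(\b(x),\a(y),z)+as_{\a,\b}(\b(y),\a(x),z)$ when $\nu=\mu$; thus with $d_t=\sum_i d_i t^i$ the equations \eqref{deformation 4} say exactly that every homogeneous component of $d_t\diamond d_t$ vanishes. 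Comparing the definition of $\delta^2$ in the adjoint representation $(A,L,R,\a,\b)$ with the definition of $\diamond$, one checks term by term that $\delta^2 d_k=d_0\diamond d_k+d_k\diamond d_0$ with $d_0=\mu$; hence the order-$k$ equation of \eqref{deformation 4} is equivalent to $\delta^2 d_k=-\mathrm{Ob}_k$, and this holds for all $k\le m-1$ by hypothesis.

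First I would extend $\diamond$ to a product $\bar\diamond$ pairing a $3$-cochain with a $2$-cochain, obtained by inserting the $2$-cochain into each slot of the $3$-cochain with the $\a,\b$-twists dictated by \eqref{Bihom-cochain1}--\eqref{Bihom-cochain2}, and then establish the two structural identities that drive the argument. The first is a Leibniz-type compatibility expressing $\delta^3(f\diamond g)$ through $(\delta^2 f)\bar\diamond g$, $f\bar\diamond(\delta^2 g)$ and the $\mu$-diamonds; specialised to $f=g=\mu$ it recovers the already-proved $\delta^3\circ\delta^2=0$, so it is the natural next instance of the same phenomenon. The second is a pre-Lie--type symmetry: the associator $(f\diamond g)\bar\diamond h-f\diamond(g\diamond h)$ is symmetric under interchanging the two $2$-cochains $g,h$. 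Both identities are proved by unwinding $\delta^3$, $\delta^2$ and $\diamond$ and repeatedly invoking the representation axioms \eqref{rep1}--\eqref{rep4} together with the left BiHom-alternative identity \eqref{sa1}, using $\a\b=\b\a$ and the multiplicativity of $\a,\b$ to align the twisted arguments.

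Granting these, the computation is mechanical. Applying $\delta^3$ to $\mathrm{Ob}_m$ and invoking the Leibniz identity produces a sum of terms in which $\delta^2 d_i$ and $\delta^2 d_j$ appear; since $i,j<m$ each of these may be replaced by $-\mathrm{Ob}_i$ and $-\mathrm{Ob}_j$, turning $\delta^3\mathrm{Ob}_m$ into a triple sum $\sum_{i+j+l=m}$ of nested diamonds. The pre-Lie symmetry then pairs these terms so that, after symmetrising over the summation indices $i,j,l$, the whole expression cancels in pairs and $\delta^3\mathrm{Ob}_m=0$. This is the BiHom-alternative avatar of the classical fact that in a differential graded Lie algebra $\sum_{i+j=m}[d_i,d_j]$ is closed because $\delta=[\mu,-]$ and $[\mu,\mu]=0$.

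The hard part will be the second paragraph: proving the pre-Lie symmetry of the $\diamond$-associator and the $\delta^3$-Leibniz formula in the genuinely non-associative, doubly twisted setting. There is no honest graded Lie bracket here to quote, so these identities must be wrung out of \eqref{rep1}--\eqref{rep4} and \eqref{sa1} by hand, and keeping track of the interlocking $\a$, $\b$ and $\a\b$ prefactors sitting on the three arguments of each cochain is where all the real difficulty lies. Once the symmetry is secured the final cancellation in the third paragraph is forced, so I would concentrate the entire effort on those two structural identities and treat the closing summation as routine bookkeeping.
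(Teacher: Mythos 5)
Your framework is set up correctly and matches the paper's definitions: in the adjoint representation one indeed has $\delta^2 f=\mu\diamond f+f\diamond\mu$, so the order-$k$ equation of \eqref{deformation 4} is exactly $\delta^2 d_k=-\mathrm{Ob}_k$, and $\mu\diamond\mu$ is the symmetrized left BiHom-associator. But the proof itself is missing. The entire mathematical content of the theorem has been shifted into your two ``structural identities'' --- the Leibniz-type formula for $\delta^3(f\diamond g)$ and the symmetry of the $\diamond$-associator in its last two arguments --- and neither is proved, nor even precisely stated: the operation $\bar\diamond$ pairing a $3$-cochain with a $2$-cochain is never written down, and which $\alpha,\beta$-twists go on each slot is exactly the kind of data that cannot be guessed from the associative case. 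Your own closing paragraph concedes this, so what you have is a strategy, not a proof; the paper, for its part, asserts the result follows from ``a straightforward computation,'' i.e.\ direct expansion of $\delta^3\bigl(\sum_{i+j=m}d_i\diamond d_j\bigr)$ using the lower-order equations, \eqref{rep1}--\eqref{rep4} for the adjoint representation, and \eqref{sa1} --- a computation you have not carried out either.

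The gap is genuine rather than cosmetic because there is real doubt that your two identities hold in the form you expect. In Gerstenhaber's theory they follow from the pre-Lie identity of the circle product, which is available because the Hochschild complex carries a graded Lie structure with $\delta=[\mu,-]$. Here no such structure is known: the complex is truncated at degree $3$ ($\delta^p=0$ for $p>3$), the diamond already has the $(x,y)$-symmetrization built in (it is not a composition product, and $f\diamond g$ need not determine $g$), and $\delta^3$ mixes terms with $\tau_{12}$-type and $\tau_{13}$-type slot swaps carrying different $\alpha,\beta$ prefactors. Under these circumstances a clean Leibniz rule and associator symmetry may simply fail, or hold only up to correction terms that themselves need the lower deformation equations --- which is precisely the issue your plan was supposed to settle. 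Until you either verify those two identities explicitly or perform the direct expansion the paper alludes to, the theorem remains unproved by your argument.
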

We cite the following observations
\begin{enumerate}
\item The cohomology class of the element $\displaystyle\sum_ {i+j=m,\ \ i,j\neq m}
d_i\diamond  d _j$ is the first obstruction to the integrability of $d _m$.

Let us consider now how to extend an infinitesimal deformation to a deformation of order $2$. Suppose $m=2$ and $d_t=d _0+td _1+t^2d _2$. The deformation
equation of the truncated deformation of order $2$ is equivalent to the finite system
$$
\left\{
\begin{array}{lll}
d_0\diamond  d _0&=&0\quad \left( d _0
\text{ is left BiHom-alternative}\right)  \\ \delta^2 d _1&=&0\quad \left( d _1\in
Z^2\left(A,A\right) \right)  \\
d _1\diamond  d_1&=&\delta^2 d _2
\end{array}
\right.
$$

Then $d _1\diamond d _1$ is the first obstruction to integrate $d _1$ and
$d _1\diamond d _1\in Z^3\left( A,A\right) .$

The elements $d_1\diamond d _1$ which are coboundaries permit to extend the deformation of order one
to a deformation of order $2$. But the elements of $H^3\left( A,A\right) $
gives the obstruction to the integration of $d _1$.

\item  If $d _m$ is integrable, then the cohomological class of $\displaystyle \sum_
{i+j=m, \quad i,j\neq m} d _i\ \diamond d_j$ must be $0$.
In the previous example,  $d _1$ is integrable implies $d_1\diamond  d _1=\delta^2 d _2$
which means that the cohomology class of $d _1\diamond d _1$ vanishes.
\end{enumerate}

\begin{cor}
If $H^3\left( A,A\right) =0$, then all obstructions vanish and every $%
\mu _m\in Z^2\left( A,A\right) $ is integrable.
\end{cor}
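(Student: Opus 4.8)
The plan is to argue by induction on the order of the truncated deformation, using the vanishing of $H^3(A,A)$ to kill each successive obstruction. I fix an arbitrary $2$-cocycle $d_1 \in Z^2_{\a,\b}(A,A)$ and set $d_0 = \mu$; the goal is to produce a family $(d_i)_{i\geq 0}$ so that $d_t = \sum_{i\geq 0} d_i t^i$ satisfies the deformation equations, which is exactly the assertion that $d_1$ is integrable. Since $d_1$ is arbitrary, this establishes that every element of $Z^2_{\a,\b}(A,A)$ is integrable.

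For the base of the induction, I would note that $(d_0,d_1)$ satisfies \eqref{deformation 4} in order $k=0$ (this is the left BiHom-alternativity of $\mu$, i.e. $d_0\diamond d_0=0$) and in order $k=1$ (this is $\delta^2 d_1=0$, together with the $\a,\b$-compatibility \eqref{deformation1.1}--\eqref{deformation1.2} that is built into $d_1$ being a $2$-cochain). For the inductive step I assume that $d_0,d_1,\dots,d_{m-1}$ have already been constructed so that \eqref{deformation 4} holds for all $k\leq m-1$, and I form the obstruction
$$
\mathrm{Ob}_m = \sum_{i=1}^{m-1} d_i \diamond d_{m-i}.
$$
By the preceding theorem, $\mathrm{Ob}_m$ is a left BiHom-alternative $3$-cocycle, that is $\mathrm{Ob}_m \in Z^3_{\a,\b}(A,A)$. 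The hypothesis $H^3(A,A) = Z^3_{\a,\b}(A,A)/B^3_{\a,\b}(A,A) = 0$ gives $Z^3_{\a,\b}(A,A) = B^3_{\a,\b}(A,A)$, so there exists a $2$-cochain $d_m \in C^2_{\a,\b}(A,A)$ with $\delta^2 d_m = \mathrm{Ob}_m$. Because $d_m$ lies in $C^2_{\a,\b}(A,A)$, it automatically satisfies the cochain conditions \eqref{Bihom-cochain1}--\eqref{Bihom-cochain2} for the adjoint representation, which are precisely \eqref{deformation1.1}--\eqref{deformation1.2}; and the relation $\delta^2 d_m = \mathrm{Ob}_m$ is exactly the $m$-th equation of \eqref{deformation 4} after the rearrangement carried out above. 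Hence $d_0,\dots,d_m$ satisfy \eqref{deformation 4} up to order $m$, and iterating the construction yields the full family $(d_i)_{i\geq 0}$.

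The main point requiring care is that the obstruction $\mathrm{Ob}_m$ really is a cocycle, so that the hypothesis $H^3=0$ is applicable; this is supplied by the earlier theorem and is not something I need to reprove. The remaining subtlety is structural rather than computational: the element $d_m$ solving $\delta^2 d_m = \mathrm{Ob}_m$ must be chosen inside $C^2_{\a,\b}(A,A)$ rather than merely among arbitrary bilinear maps, so that the deformation conditions \eqref{deformation1.1}--\eqref{deformation1.2} are preserved at each stage. This is automatic, because $B^3_{\a,\b}(A,A)$ is by definition the image $\delta^2\big(C^2_{\a,\b}(A,A)\big)$, so a preimage of $\mathrm{Ob}_m\in B^3_{\a,\b}(A,A)$ can be taken within $C^2_{\a,\b}(A,A)$. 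Thus every obstruction is a coboundary, every $2$-cocycle extends order by order to a formal deformation, and the corollary follows.
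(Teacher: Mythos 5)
Your proof is correct and follows exactly the argument the paper intends: the corollary is stated there without a separate proof because it is the direct combination of the preceding theorem (obstructions lie in $Z^3_{\a,\b}(A,A)$) with the order-by-order extension procedure, which is precisely your induction. Your added remark that the preimage $d_m$ of the obstruction can be chosen inside $C^2_{\a,\b}(A,A)$ (since $B^3_{\a,\b}(A,A)=\delta^2(C^2_{\a,\b}(A,A))$ by definition), so that the compatibility conditions \eqref{deformation1.1}--\eqref{deformation1.2} persist at every stage, is a worthwhile point that the paper leaves implicit.
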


 \subsection{Equivalent and trivial deformations}
 In this paragraph, we characterize the equivalent and trivial deformations of left BiHom-alternative algebras.
\begin{df}
Let $(A, \mu, \alpha,\beta)$ be a left BiHom-alternative algebra. Suppose that $d_t(x,y,)=\sum_{i\geq0}d_i(x,y)t^i$ and $d_t'(x,y)=\sum_{i\geq0}d_i'(x,y)t^i$ are two one-parameter formal deformations of $(A, \mu, \alpha,\beta)$. They are called equivalent, denoted by $d_t\sim d_t'$, if there is a formal isomorphism of $\mathbb{K}[[t]]$-modules
$$\phi_t(x)=\sum_{i\geq0}\phi_i(x)t^i:(A[[t]],d_t,\alpha,\beta)\longrightarrow (A[[t]],d_t',\alpha,\beta),$$
where each $\phi_i:A\rightarrow A$ is a $\mathbb{K}$-linear map (extended to be $\mathbb{K}[[t]]$-linear) and $\phi_0=Id_{A}$, satisfying
\begin{align*}
&\phi_t\circ\alpha=\alpha\circ\phi_t, \ \phi_t\circ\beta=\beta \circ\phi_t \\
&\phi_t\circ d_t(x,y)=d_t'(\phi_t(x),\phi_t(y)).
\end{align*}
When $d_1=d_2=\cdots=0$, $d_t=d_0$ is said to be the null deformation. A 1-parameter formal deformation $d_t$ is called trivial if $d_t\sim d_0$. A left BiHom-alternative algebra $(A, \mu, \alpha,\beta)$ is called analytically rigid, if every $1$-parameter formal deformation $d_t$ is trivial.
\end{df}
\begin{thm}
 Let  $d_t(x, y)=\sum_{i>0}d_i(x,y)t^i$
 and $d'_t(x, y)=\sum_{i\geq0}d'_i(x,y)t^i$ be equivalent  $1$-parameter formal deformations of a left BiHom-alternative algebra $(A,\mu, \a,\b)$. Then $d_1$ and $d'_1$ belong to the
same cohomology class in $H_{\a,\b}^2(A,A)$.
\end{thm}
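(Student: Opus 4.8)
The plan is to exploit the intertwining relation that defines the equivalence of $d_t$ and $d'_t$, expand both sides as formal power series in $t$, and compare the coefficients of $t^1$ in order to exhibit an explicit $1$-cochain whose coboundary equals $d_1-d'_1$. Concretely, I would start from the equivalence condition $\phi_t\circ d_t(x,y)=d'_t(\phi_t(x),\phi_t(y))$ together with the normalizations $\phi_0=Id_A$ and $d_0=d'_0=\mu$, and rewrite each side as a single power series so that the coefficient extraction is purely mechanical.

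First I would write the left-hand side as $\sum_{k,i\geq0}\phi_k\bigl(d_i(x,y)\bigr)t^{k+i}$ and the right-hand side as $\sum_{m,p,q\geq0}d'_m\bigl(\phi_p(x),\phi_q(y)\bigr)t^{m+p+q}$, then collect the coefficient of $t^1$. On the left the only contributing index pairs are $(k,i)\in\{(0,1),(1,0)\}$, giving $d_1(x,y)+\phi_1(\mu(x,y))$; on the right the contributing triples are $(m,p,q)\in\{(1,0,0),(0,1,0),(0,0,1)\}$, giving $d'_1(x,y)+\mu(\phi_1(x),y)+\mu(x,\phi_1(y))$. Equating the two and rearranging yields
\[
d_1(x,y)-d'_1(x,y)=\mu(\phi_1(x),y)+\mu(x,\phi_1(y))-\phi_1(\mu(x,y)).
\]

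Next I would identify the right-hand side with $\delta^1\phi_1(x,y)$ for the adjoint representation $(A,L,R,\a,\b)$ from the Remark: since $L(x)(z)=R(z)(x)=xz$, the formula $\delta^1 f(x,y)=\ell(x)f(y)+r(y)f(x)-f(xy)$ specializes to $\mu(x,\phi_1(y))+\mu(\phi_1(x),y)-\phi_1(\mu(x,y))$, matching the expression above exactly. Before invoking $\delta^1$ I must check that $\phi_1$ is a genuine element of $C^1_{\a,\b}(A;A)$, i.e. that $\a\circ\phi_1=\phi_1\circ\a$ and $\b\circ\phi_1=\phi_1\circ\b$; these follow by extracting the degree-one coefficients of the compatibility relations $\phi_t\circ\a=\a\circ\phi_t$ and $\phi_t\circ\b=\b\circ\phi_t$, which hold term by term in $t$. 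Hence $\phi_1\in C^1_{\a,\b}(A;A)$ and $d_1-d'_1=\delta^1\phi_1\in B^2_{\a,\b}(A;A)$, so $d_1$ and $d'_1$ represent the same class in $H^2_{\a,\b}(A,A)$.

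The argument is essentially formal bookkeeping, so I expect no deep obstacle; the only points requiring care are verifying that $\phi_1$ lies in $C^1_{\a,\b}(A;A)$ so that $\delta^1\phi_1$ is defined, and matching the precise ordering of the arguments in $\delta^1$ with the adjoint actions $L$ and $R$ (the roles of $\ell$ and $r$). It is also worth recording that both $d_1$ and $d'_1$ are already $2$-cocycles by the $k=1$ case of the deformation equation \eqref{deformation 4}, so the content of the statement is precisely that their difference is a coboundary, which is exactly what the coefficient comparison produces.
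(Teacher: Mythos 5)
Your proof is correct and follows essentially the same route as the paper: expand the intertwining relation $\phi_t\circ d_t(x,y)=d'_t(\phi_t(x),\phi_t(y))$ as a formal power series, extract the coefficient of $t^1$ using $\phi_0=Id_A$ and $d_0=d'_0=\mu$, and recognize $d_1-d'_1$ as $\delta^1\phi_1$ for the adjoint representation. You are in fact slightly more careful than the paper, which skips the explicit check that $\phi_1$ commutes with $\a$ and $\b$ (i.e., that $\phi_1\in C^1_{\a,\b}(A;A)$, so that $\delta^1\phi_1$ is a legitimate coboundary).
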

\begin{proof}
Suppose that $\phi_t(x)=\sum_{i\geq 0}\phi_i(x)t^i$ is the formal $\mathbb{K}[[t]]$-module  isomorphism
such that $\phi_t \circ \a=\a\circ \phi_t, \phi_t \circ \b=\b\circ \phi_t$ and
\begin{eqnarray*}
\sum_{i\geq 0}\phi_i\Big(\sum_{j>0}d_j(x,y)t^j\Big)t^i=\sum_{i\geq 0}d'_{i}\Big(\sum_{k\geq 0}\phi_k(x)t^k, \sum_{k\geq 0}\phi_l(y)t^l\Big)t^i.
\end{eqnarray*}
It follows that
\begin{eqnarray*}
\sum_{i+j=n}\phi_i\big(d_j(x,y)\big)t^{i+j}=\sum_{i+k+l=n}d'_{i}\big(\phi_k(x),\phi_l(y)\big)t^{i+k+l}.
\end{eqnarray*}

In particular,
\begin{eqnarray*}
\sum_{i+j=1}\phi_i\big(d_j(x,y)\big))t^{i+j}=\sum_{i+k+l=1}d'_{i}\big(\phi_k(x),\phi_l(y)\big)t^{i+k+l}.
\end{eqnarray*}
Since $\f_0=id$, we obtain
\begin{eqnarray*}
d_1(x,y)+\phi_1(\mu(x,y))=\mu(\phi_1(x),y)+\mu(x, \phi_1(y))+d'_1(x,y).
\end{eqnarray*}
Then $d_1-d'_1\in B_{\a,\b}^2(A,A)$. Therefore, $d_1$ and $d'_1$ are cohomologous.
\end{proof}

\begin{thm}
Let $(A,\mu, \a,\b)$ be a  left BiHom-alternative algebra such that  $H_{\a,\b}^2(A,A)=0$. Then $(A,\mu, \a,\b)$ is analytically rigid.
\end{thm}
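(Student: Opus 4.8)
The plan is to show that any one-parameter formal deformation $d_t$ of $(A,\mu,\a,\b)$ is trivial, i.e.\ equivalent to the null deformation $d_0=\mu$, under the hypothesis $H_{\a,\b}^2(A,A)=0$. The proof proceeds by an induction on the order of the deformation, successively killing the lowest-degree nontrivial term by a suitable formal automorphism, exactly in the spirit of Gerstenhaber's rigidity argument for associative algebras. The starting observation is that by Theorem 3.1 (with $p=1$ if $d_1\neq 0$, or more generally with the first nonvanishing $d_p$), the leading term $d_p$ is a $2$-cocycle, hence $d_p\in Z_{\a,\b}^2(A,A)$. Since $H_{\a,\b}^2(A,A)=0$ forces $Z_{\a,\b}^2(A,A)=B_{\a,\b}^2(A,A)$, there is a $1$-cochain $h\in C_{\a,\b}^1(A,A)$ with $d_p=\delta^1 h$.

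The key step is to build a formal isomorphism that removes this leading term. I would set
\begin{equation*}
\phi_t=\mathrm{Id}_A-t^p\,h,
\end{equation*}
which is a formal $\mathbb{K}[[t]]$-module automorphism of $A[[t]]$ with $\phi_0=\mathrm{Id}_A$; one must first check $\phi_t\circ\a=\a\circ\phi_t$ and $\phi_t\circ\b=\b\circ\phi_t$, which follow from the cochain conditions \eqref{Bihom-cochain1}--\eqref{Bihom-cochain2} on $h$ (namely $\f\circ h=h\circ\a$ and $\p\circ h=h\circ\b$, with $\f=\a,\ \p=\b$ in the adjoint representation). Transporting $d_t$ through $\phi_t$ produces an equivalent deformation $d_t'$ defined by $\phi_t\circ d_t(x,y)=d_t'(\phi_t(x),\phi_t(y))$. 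Expanding both sides in powers of $t$ and comparing coefficients up to order $p$, all terms of $d_t'$ below degree $p$ agree with $\mu$ (they are unchanged since $\phi_t$ is the identity modulo $t^p$), while the degree-$p$ term becomes
\begin{equation*}
d_p'=d_p-\delta^1 h=0,
\end{equation*}
because the correction coming from $\phi_t$ contributes precisely $\mu(h(x),y)+\mu(x,h(y))-h(\mu(x,y))=\delta^1 h(x,y)$. Thus $d_t'$ is a deformation whose first possibly-nonzero term past $\mu$ sits in degree strictly greater than $p$.

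Iterating this construction yields, for each $N$, a deformation equivalent to $d_t$ that agrees with $\mu$ through order $N$; taking the composite (infinite but formally convergent, since the $n$-th automorphism modifies only coefficients of order $\geq n$) of all these isomorphisms gives a single formal isomorphism $\Phi_t$ with $\Phi_t\circ\a=\a\circ\Phi_t$, $\Phi_t\circ\b=\b\circ\Phi_t$ and $\Phi_t\circ d_t=\mu(\Phi_t(\cdot),\Phi_t(\cdot))$, showing $d_t\sim d_0$. Hence every formal deformation is trivial and $A$ is analytically rigid. The main obstacle I anticipate is twofold: first, verifying cleanly that at each stage the transported leading term is genuinely the old leading term minus a coboundary (this requires that the new term $d_{p+1},d_{p+2},\dots$ do not interfere at degree $p$, which is automatic but should be argued), and second, ensuring the well-definedness of the infinite composite $\Phi_t$ as a formal power series—this is the standard ``each coefficient is eventually stable'' argument, but it is the step where one must be careful that the inductively chosen automorphisms assemble into a genuine element of the automorphism group of $A[[t]]$ commuting with $\a$ and $\b$.
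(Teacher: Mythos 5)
Your proposal follows essentially the same route as the paper's proof: isolate the first nonvanishing coefficient $d_p$, observe it is a $2$-cocycle, use $H^2_{\a,\b}(A,A)=0$ to write $d_p=\delta^1 h$ for a $1$-cochain $h$ (whose cochain conditions give the commutation of $\mathrm{Id}-t^ph$ with $\a$ and $\b$), kill the leading term by this formal automorphism, and induct. The paper does exactly this with $f_n$ in place of $h$, and your closing discussion of assembling the infinite composite is the standard argument that the paper compresses into ``by induction, this procedure ends with $d_t\sim d_0$.''

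There is, however, one concrete slip you should fix: your sign is incompatible with the direction of your transport. You define $d'_t$ by $\phi_t\circ d_t(x,y)=d'_t(\phi_t(x),\phi_t(y))$ with $\phi_t=\mathrm{Id}-t^p h$. Expanding at order $p$, the left side contributes $d_p(x,y)-h(\mu(x,y))$ while the right side contributes $d'_p(x,y)-\mu(h(x),y)-\mu(x,h(y))$, so in fact $d'_p=d_p+\delta^1h$, not $d_p-\delta^1h$; with your choice $d_p=\delta^1 h$ this gives $d'_p=2d_p\neq 0$ and the leading term is doubled rather than cancelled. The repair is trivial and does not change the structure of the argument: either take $\phi_t=\mathrm{Id}+t^p h$ (equivalently, choose $h$ with $\delta^1h=-d_p$), or do as the paper does and let $\phi_t=\mathrm{Id}-t^p h$ be a morphism in the opposite direction, from the new algebra to the old one, i.e.\ define $d'_t(x,y)=\phi_t^{-1}\big(d_t(\phi_t(x),\phi_t(y))\big)$, which yields $d'_p=d_p-\delta^1h=0$ as desired (equivalence is symmetric, since $\phi_t^{-1}$ also commutes with $\a$ and $\b$). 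With this sign corrected, the remaining steps of your argument --- vanishing of the coefficients below degree $p$, verification that $d'_t$ is again a deformation, the induction, and the formal convergence of the composite --- go through and coincide with the paper's proof.
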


\begin{proof}
 Let $d_t$ be a one-parameter formal deformation of $(A,\mu, \a,\b)$. Suppose that $d_t=d_0+\sum_{i\geq n}d_it^i$. Then
\begin{eqnarray*}
\delta^2d_n=d_1\diamond d_{n-1}+d_2 \diamond d_{n-2}+\cdot\cdot\cdot+d_{n-1} \diamond d_0=0,
\end{eqnarray*}
that is $d_n\in Z_{\a,\b}^2(A,A)=B_{\a,\b}^2(A,A)$. It follows that there exists $f_n\in C_{\a,\b}^1(A,A)$ such that $d_n=\delta^1f_n$.

Let $\phi_t=id_{A}-t^nf_n: (A[[t]], d'_t, \a,\b)\rightarrow (A[[t]], d_t, \a,\b) $, where
$d'_t(x,y)=\phi_t^{-1}d_t(\phi_t(x), \phi_t(y))$.  Here, $\phi_t$ is a linear isomorphism
since
\begin{eqnarray*}
\phi_t\circ \sum_{i\geq 0}f_n^it^{in}=\sum_{i\geq 0}f_n^it^{in}\circ \phi_t=id_{A[[t]]}.
\end{eqnarray*}
 Moreover, we have $\phi_t\circ \a=\a\circ\phi_t $ and $\phi_t\circ \b=\b\circ\phi_t $.

It is straightforward to prove that $d'_t$ is a one-parameter formal deformation of $(A,\mu, \a,\b)$ and $d_t\sim d'_t$. Assume that $d'_t(x,y)=\sum_{i\geq 0}d'_i(x,y)t^i$ and use the fact that $\f_t \circ d'_t(x,y)=d_t(\f_t(x),\f_t(y))$. Then
\begin{eqnarray*}
(id_{A}-f_nt^n)\Big(\sum_{i\geq 0}d'_i(x,y)t^i\Big)
=  \Big(d_0+\sum_{i\geq n}d_it^i\Big)(x-f_n(x)t^n,y-f_n(y)t^n),
\end{eqnarray*}
i.e.,
\begin{eqnarray*}
&&d'_0(x, y)+\sum_{i\geq 1}d'_i(x,y)t^i-\sum_{i \geq 0} f_n \circ d'_i(x,y)t^{i+n}\\
&=&d_0(x,y)-(d_0(f_n(x),y)+d_0(x, f_n(y)))t^n+d_0(f_n(x), f_n(y))t^{2n}\\
&&+\sum_{i\geq n}d_{i}(x,y)t^i-\sum_{i\geq n}(d_{i}(f_n(x),y)+d_{i}(x,f_n(y) )t^{i+n}+\sum_{i\geq n}d_i(f_n(x), f_n(y))t^{i+2n}.
\end{eqnarray*}
Then we have $d_1'=d_2'=...=d_{n-1}'=0$. Since $d_0=d'_0$, then
\begin{eqnarray*}
d_n'(x,y)-f_n \circ d_0(x,y) =-(d_0(f_n(x),y)+d_0(x, f_n(y)))+d_n(x,y).
\end{eqnarray*}
Hence $d_n'=d_n-\delta^1f_n=0$ and $d'_t(x,y)=d'_0+\sum_{i\geq n+1}d'_i(x,y)t^i$. By induction, this procedure
ends with $d_t \sim d_0$, so., $(A,\mu, \a,\b)$ is analytically rigid.
\end{proof}

\section{Extensions of BiHom-alternative algebras}
\subsection{Dual representation}
In this section, we construct the dual representation of a representation of a BiHom-alternative algebra
without any additional condition. This is nontrivial and to our knowledge, people needed to add a very
strong condition to obtain a representation on the dual space in  former studies which  restricts
its development.

Let $(V,\ell,r, \f,\p)$ be a regular representation of a BiHom-alternative algebra $(A,\mu,\a,\b)$. Define $\ell^*,r^*:A\longrightarrow \textrm{gl}(V^*)$ as usual by
$$\langle \ell^*(x)(\xi),u\rangle=\langle\xi,\ell(x)(u)\rangle \quad \text{and}\ \langle r^*(x)(\xi),u\rangle=\langle\xi,r(x)(u)\rangle$$ for all $x\in A,u\in V,\xi\in V^*.$
However, in general $(r^*,\ell^*)$ is not a representation of $A$. Define $r^\star,\ell^\star:A\longrightarrow \textrm{gl}(V^*)$ by
\begin{equation}\label{eq:new1}
 \ell^\star(x)(\xi):=\ell^*(\a^{-1}\b^2(x))\big{(}(\f^{-1}\p^{-1})^*(\xi)\big{)}
\end{equation} and \begin{equation}\label{eq:new2}
 r^\star(x)(\xi):=r^*(\a^2\b^{-1}(x))\big{(}(\f^{-1}\p^{-1})^*(\xi)\big{)}
\end{equation} for all $x\in A,\xi\in V^*$.
More precisely, we have
\begin{eqnarray}\label{eq:new1gen}
&&\langle\ell^\star(x)(\xi),u\rangle=\langle\xi,\ell(\a^{-2}\b(x))(\f^{-1}\p^{-1}(u))\rangle\\ \label{eq:new2gen}&&
\langle r^\star(x)(\xi),u\rangle=\langle\xi,r(\a\b^{-2}(x))(\f^{-1}\p^{-1}(u))\rangle
\end{eqnarray}for all $x\in A, u\in V, \xi\in V^*$.
\begin{thm}\label{lem:dualrep}
 Let $(V,\ell,r,\f,\p)$ be a representation of a BiHom-alternative algebra $(A,\mu,\a,\b)$. Then $(V^*,r^\star,\ell^\star,(\f^{-1})^*,(\p^{-1})^*)$ is a representation of $(A,\mu,\a,\p)$ where the linear maps $\ell^\star,r^\star:A\longrightarrow gl(V^*)$ are defined above by \eqref{eq:new1} and  \eqref{eq:new2} .
\end{thm}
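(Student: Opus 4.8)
\section*{Proof proposal}

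The plan is to verify directly that the quintuple $(V^*,r^\star,\ell^\star,(\f^{-1})^*,(\p^{-1})^*)$ satisfies all the defining identities of a bimodule: the four equivariance relations on the first two lines of the definition together with the four structural identities \eqref{rep1}--\eqref{rep4}, now read for the dual data. Since $\ell^\star$ and $r^\star$ are defined through the dual pairing, the natural method is, for each identity, to pair both sides with an arbitrary $u\in V$, unfold $\ell^\star,r^\star,(\f^{-1})^*,(\p^{-1})^*$ using \eqref{eq:new1gen}, \eqref{eq:new2gen} and $\langle(\f^{-1})^*\xi,u\rangle=\langle\xi,\f^{-1}(u)\rangle$, push everything inside the bracket onto the $V$-slot, and then recognize the resulting expression as an instance of an axiom of the original representation $(V,\ell,r,\f,\p)$. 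Regularity of $(A,\mu,\a,\b)$ and of the representation (invertibility of $\a,\b,\f,\p$) is used throughout to make sense of the twists $\a^{-2}\b$, $\a\b^{-2}$ and $\f^{-1}\p^{-1}$.

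First I would dispatch the four equivariance conditions, for instance $(\f^{-1})^*r^\star(x)=r^\star(\a(x))(\f^{-1})^*$ and its three analogues. Pairing with $u$ and unfolding, each side becomes $\langle\xi,r(\text{twist})\,\f^{-1}\p^{-1}\f^{-1}(u)\rangle$ up to the placement of the outer $\f^{-1}$; the identity then reduces to the original $\f r(x)=r(\a(x))\f$ (respectively $\p r(x)=r(\b(x))\p$, and the $\ell$-versions) after commuting $\f$ and $\p$ past one another. These are routine and use only the first two lines of the original definition together with $\f\p=\p\f$.

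The substance lies in the four structural identities, where the key feature is a \emph{left--right swap}: the new left action is $r^\star$ (built from the original right action $r$) and the new right action is $\ell^\star$ (built from $\ell$), so the dual ``left'' identity \eqref{rep1} for $(V^*,r^\star,\ell^\star,\dots)$ turns, after unfolding, into the original ``right'' identity \eqref{rep2} for $(V,\ell,r,\dots)$, and vice versa; likewise the two mixed identities \eqref{rep3} and \eqref{rep4} are exchanged. For the two alternative-type relations this is a one-term-per-side computation: transporting to the $V$-slot and setting $a=\a\b^{-3}(x)$ one reads off $r(\b(a)\a(a))$ and $r(\a\b(a))r(\b(a))$, which is exactly \eqref{rep2} (resp.\ \eqref{rep1}) evaluated at $v=\f^{-2}\p^{-2}(u)$. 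For the two mixed relations each side is a difference of two terms; after pairing with $u$ and unfolding, one obtains a difference of two $r$- (resp.\ $\ell$-) expressions on the $V$-slot that is precisely an instance of \eqref{rep4} (resp.\ \eqref{rep3}) at the twisted arguments.

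The main obstacle, and the real content of the statement, is the bookkeeping of the three twists. One must check that $\a^{-2}\b$, $\a\b^{-2}$ and $\f^{-1}\p^{-1}$ are chosen \emph{exactly} so that, once $\f,\p,\a,\b$ have been commuted into position and the compatibilities $\f\ell(x)=\ell(\a(x))\f$, $\p r(x)=r(\b(x))\p$, etc.\ have been applied, the multiplicities of $\a,\b$ and of $\f,\p$ inside the transported bracket line up on the nose with those required by \eqref{rep1}--\eqref{rep4}, leaving no residual twist. It is precisely this exact cancellation that removes the ``very strong condition'' needed in earlier treatments; verifying it amounts to tracking the exponents of $\a,\b,\f,\p$ through each term, where $\a\b=\b\a$, $\f\p=\p\f$ and the $\a,\b$-equivariance of $\ell,r$ are exactly what permit the exponents to be rearranged into the correct pattern.
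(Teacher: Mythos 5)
Your strategy coincides with the paper's: pair against an arbitrary $u\in V$, unfold $\ell^\star$, $r^\star$, $(\f^{-1})^*$, $(\p^{-1})^*$ through the pairing, push everything onto the $V$-slot, and recognize an axiom of the original representation $(V,\ell,r,\f,\p)$. Your handling of the equivariance relations and of the two alternative-type identities is correct and matches the paper's explicit computations; in particular your substitution $a=\a\b^{-3}(x)$, $v=\f^{-2}\p^{-2}(u)$ reproduces exactly the paper's reduction of the $r^\star$-identity to \eqref{rep2} (for the $\ell^\star$-identity the analogous substitution is $a=\a^{-3}\b(x)$, not the same $a$; a minor imprecision).

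The genuine gap is your assertion that the two mixed identities \eqref{rep3} and \eqref{rep4} ``are exchanged'', i.e.\ that the dual form of \eqref{rep3} unfolds into ``precisely an instance of \eqref{rep4} at the twisted arguments''. It does not, and that step, carried out as described, fails. After unfolding, the dual \eqref{rep3} produces four terms on $V$ with the same shapes as \eqref{rep4} (namely $r\ell$, $\ell r$, $\ell(\text{product})$, $\ell\ell$), but no single substitution aligns the variables: matching the two composite terms forces the $r$-slots to carry the $x$-variable and the $\ell$-slots the $y$-variable, while matching the product term forces the opposite assignment, because \eqref{rep4} contains $\ell(y\b(x))$ whereas the unfolded product term has the $x$-part on the left. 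The classical specialization $\a=\b=\f=\p=\mathrm{id}$ exposes the issue cleanly: transposed back to $V$, the dual \eqref{rep3} reads $r(x)\ell(y)-\ell(y)r(x)=\ell(xy)-\ell(x)\ell(y)$, while \eqref{rep4} (after renaming variables) reads $\ell(x)r(y)-r(y)\ell(x)=\ell(xy)-\ell(x)\ell(y)$; these are different identities. The correct derivation needs two axioms: \eqref{rep4} gives $r(x)\ell(y)-\ell(y)r(x)=\ell(y)\ell(x)-\ell(yx)$, and this equals $\ell(xy)-\ell(x)\ell(y)$ precisely by the linearized (polarized) form of \eqref{rep1}, namely $\ell(xy+yx)=\ell(x)\ell(y)+\ell(y)\ell(x)$; symmetrically, the dual \eqref{rep4} needs \eqref{rep3} together with the linearization of \eqref{rep2}. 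The same two-axiom pattern, with the BiHom twists tracked as you do elsewhere, is what actually proves the mixed identities. Note that the paper leaves exactly this part to the reader (``Similarly, we can obtain the other identities''), so your proposal breaks down precisely on the portion the paper does not display.
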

\begin{proof}
For all $x\in A,\xi\in V^*$,  we have
{\small\begin{eqnarray*}
\ell^\star(\a(x))((\f^{-1})^*(\xi))=\ell^*(\b^2(x))(\f^{-2}\p^{-1})^*(\xi)=(\f^{-1})^*(\ell^*(\a^{-1}\b^2(x))(\f^{-1}\p^{-1})^*(\xi))=(\f^{-1})^*(\ell^\star(x)(\xi))
\end{eqnarray*}}
and 
{\small\begin{eqnarray*}
\ell^\star(\b(x))((\p^{-1})^*(\xi))=\ell^*(\a^{-1}\b^3(x))(\f^{-1}\p^{-2})^*(\xi)=(\p^{-1})^*(\ell^*(\a^{-1}\b^2(x))(\f^{-1}\p^{-1})^*(\xi))=(\p^{-1})^*(\ell^\star(x)(\xi)).
\end{eqnarray*}}
Similarly, we can check that
\begin{eqnarray*}
r^\star(\a(x))((\f^{-1})^*(\xi))=(\f^{-1})^*(r^\star(x)(\xi)),
\end{eqnarray*}
\begin{eqnarray*}
r^\star(\b(x))((\p^{-1})^*(\xi))=(\p^{-1})^*(r^\star(x)(\xi)).
\end{eqnarray*}
On the other hand, by the Eqs \eqref{rep1}, \eqref{rep2} and \eqref{eq:new1gen} , for all $x\in A,\xi\in V^*$ and $u\in V$, we have
\begin{eqnarray*}
&&\langle  \ell^\star(\b(x)\a(x))(\f^{-1})^*(\xi)- \ell^\star(\a\b(x))\ell^\star(\b(x))(\xi),u\rangle\\
&=&\langle  \ell^\star(\b(x)\a(x))(\f^{-1})^*(\xi),u\rangle-\langle   \ell^\star(\a\b(x))\ell^\star(\b(x))(\xi),u\rangle\\
&=&\langle(\f^{-1})^*(\xi),\ell(\a^{-2}\b^2(x)\a^{-1}\b(x))(\f^{-1}\p^{-1}(u))\rangle-\langle   \xi,\ell(\a^{-2}\b^2(x))\ell(\a^{-2}\b(x))(\f^{-2}\p^{-2}(u))\rangle\\
&=&\langle\xi,\ell(\a^{-3}\b^2(x)\a^{-2}\b(x))(\f^{-2}\p^{-1}(u))-\ell(\a^{-2}\b^2(x))\ell(\a^{-2}\b(x))(\f^{-2}\p^{-2}(u))\rangle=0,
\end{eqnarray*}
Using Eqs \eqref{rep2}, \eqref{rep2} and \eqref{eq:new2gen} , for all $x\in A,\xi\in V^*$ and $u\in V$, we have
\begin{eqnarray*}
&&\langle  r^\star(\b(x)\a(x))(\p^{-1})^*(\xi)- r^\star(\a\b(x))r^\star(\a(x))(\xi),u\rangle\\
&=&\langle  r^\star(\b(x)\a(x))(\p^{-1})^*(\xi),u\rangle-\langle   r^\star(\a\b(x))r^\star(\a(x))(\xi),u\rangle\\
&=&\langle(\p^{-1})^*(\xi),r(\a\b^{-1}(x)\a^2\b^{-2}(x))(\f^{-1}\p^{-1}(u))\rangle-\langle   \xi,r(\a^2\b^{-2}(x))r(\a\b^{-2}(x))(\f^{-2}\p^{-2}(u))\rangle\\
&=&\langle\xi,r(\a\b^{-2}(x)\a^2\b^{-3}(x))(\f^{-1}\p^{-2}(u))-r(\a^2\b^{-2}(x))r(\a\b^{-2}(x))(\f^{-2}\p^{-2}(u))\rangle=0.
\end{eqnarray*}
Similarly, we can obtain the other identities.
Therefore, $(V^*,r^\star,\ell^\star,(\f^{-1})^*,(\p^{-1})^*)$ is a representation of  $(A,\mu,\a,\b)$.
\end{proof}

\begin{lem}\label{lem:dualdual}
Let $(V,\ell,r,\f,\p)$ be a representation of BiHom-alternative  algebra  $(A,\mu,\a,\b)$. Then we have $$\ (\ell^\star)^\star=\ell\circ \a^{-3}\b^3\ \text{and} \ \ (r^\star)^\star=r\circ \a^3\b^{-3} .$$
\end{lem}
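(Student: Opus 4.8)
The plan is to unwind the double dual by applying the construction of Theorem \ref{lem:dualrep} to the dual representation itself, using the \emph{dual} structure maps. By that theorem $(V^*,r^\star,\ell^\star,(\f^{-1})^*,(\p^{-1})^*)$ is a representation, so when we form $(\ell^\star)^\star$ via \eqref{eq:new1} the outer $\star$ must be taken with respect to $\tilde\f:=(\f^{-1})^*$ and $\tilde\p:=(\p^{-1})^*$. First I would record the two elementary facts that drive everything: since dualization is contravariant, $(\f^{-1})^*=(\f^*)^{-1}$ and $(\p^{-1})^*=(\p^*)^{-1}$, so that $\tilde\f^{-1}\tilde\p^{-1}=\f^*\p^*$; and under the canonical identification $V^{**}\cong V$ (which is where finite-dimensionality and regularity enter) one has $(\f^*\p^*)^*=\f\p$ together with $(\ell^*(z))^*=\ell(z)$ and $(r^*(z))^*=r(z)$ for all $z\in A$.

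Next I would simply compose the adjoints. The inner $\ell^\star$ contributes the argument $\a^{-1}\b^2$ inside $\ell$ and the factor $(\f^{-1}\p^{-1})^*$; taking $(\ell^\star)^*$ and identifying $V^{**}\cong V$ turns this into $\f^{-1}\p^{-1}\circ\ell(\a^{-1}\b^2(\,\cdot\,))$, and the outer $\star$ then feeds in $\a^{-1}\b^2(x)$ on the argument side and $(\tilde\f^{-1}\tilde\p^{-1})^*=\f\p$ on the functional side. The two powers $\a^{-1}\b^2$ multiply to $\a^{-2}\b^4$, giving, under $V^{**}\cong V$,
\begin{equation*}
(\ell^\star)^\star(x)=\f^{-1}\p^{-1}\,\ell(\a^{-2}\b^4(x))\,\f\p .
\end{equation*}
It then remains to pull $\f$ and $\p$ through $\ell$ using the intertwining relations $\f\ell(y)=\ell(\a(y))\f$ and $\p\ell(y)=\ell(\b(y))\p$ of the representation. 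Moving $\f\p$ to the left past $\ell$ shifts the argument by $\a^{-1}\b^{-1}$ and cancels the leading $\f^{-1}\p^{-1}$, leaving $(\ell^\star)^\star(x)=\ell(\a^{-3}\b^3(x))$, which is the first identity.

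The argument for $r^\star$ is entirely parallel: using \eqref{eq:new2} the power $\a^2\b^{-1}$ occurs twice and multiplies to $\a^4\b^{-2}$, and the same computation gives $(r^\star)^\star(x)=\f^{-1}\p^{-1}\,r(\a^4\b^{-2}(x))\,\f\p$. Applying the relations $\f r(y)=r(\a(y))\f$ and $\p r(y)=r(\b(y))\p$ to pull $\f\p$ through $r$ shifts the argument by $\a^{-1}\b^{-1}$ and cancels $\f^{-1}\p^{-1}$, yielding $(r^\star)^\star(x)=r(\a^3\b^{-3}(x))$.

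I expect the only real subtlety to be bookkeeping rather than genuine difficulty, and it lies in one point: one must resist cancelling the two dualizations against each other. Because the outer $\star$ is computed with the dual maps $(\f^{-1})^*,(\p^{-1})^*$ and \emph{not} the original $\f,\p$, the functional-side contributions combine as the conjugation $\f^{-1}\p^{-1}(\,\cdot\,)\f\p$ rather than trivializing; it is precisely pulling this conjugation through $\ell$ (resp.\ $r$) by the covariance relations that produces the nontrivial twist $\a^{-3}\b^3$ (resp.\ $\a^{3}\b^{-3}$) instead of the identity. The invertibility of $\a,\b,\f,\p$ (regularity) and the identification $V\cong V^{**}$ are used silently throughout.
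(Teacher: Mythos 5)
Your proof is correct and takes essentially the same route as the paper's: both unwind the outer $\star$ with the dual structure maps (so that the functional factor becomes $\f\p$ rather than cancelling), compose the two algebra-side twists, and use the intertwining relations $\f\ell(y)=\ell(\a(y))\f$, $\p\ell(y)=\ell(\b(y))\p$ (resp. for $r$) to obtain $\a^{-3}\b^3$ and $\a^3\b^{-3}$. The only difference is bookkeeping: you carry out the computation at the operator level via the conjugation $\f^{-1}\p^{-1}(\cdot)\f\p$, whereas the paper runs the identical calculation through pairings using \eqref{eq:new1gen}.
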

\begin{proof}
Let $x\in A,u\in V,\xi\in V^*$, we have
\begin{eqnarray*}
\langle\xi, (\ell^\star)^\star(x)(u)\rangle
&=&\langle\xi,(\ell^\star)^*(\a^{-1}\b^2(x))(\f\p(u))\rangle
=\langle\ell^\star(\a^{-1}\b^2(x))(\xi),\f\p(u)\rangle\\
&=&\langle\xi,\ell(\a^{-3}\b^{3}(x))(u)\rangle,
\end{eqnarray*}
which implies that $(\ell ^\star)^\star=\ell \circ \a^{-3}\b^3$.
Similarly, we can check 
$(r^\star)^\star=r\circ\a^3\b^{-3}  .$
\end{proof}

\begin{cor}\label{lem:rep}
 Let $(A,\mu,\a,\b)$ be a BiHom-alternative algebra. Then $R^\star,L^\star:A\longrightarrow gl(A^*)$  defined by
 \begin{equation}
  L^\star(x)(\xi)=L^*(\a^{-1}\b^2(x))(\a^{-1}\b^{-1})^*(\xi)\ \text{and} \ R^\star(x)(\xi)=R^*(\a^2\b^{-1}(x))(\a^{-1}\b^{-1})^*(\xi)
 \end{equation} for all $x\in A, \xi\in A^*$,
 is a representation of the BiHom-alternative algebra $(A,\mu,\a,\b)$ on $A^*$ with respect to $\big((\a^{-1})^*,(\b^{-1})^*\big)$.  It is called the coadjoint representation.
\end{cor}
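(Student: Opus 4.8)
The plan is to deduce this corollary directly from Theorem \ref{lem:dualrep} by specializing the construction to the adjoint representation. Recall from the Remark that for the BiHom-alternative algebra $(A,\mu,\a,\b)$, the quadruple of operators $(A,L,R,\a,\b)$, with $L(x)(y)=R(y)(x)=xy$, is a representation of $A$ on itself. Since $A$ is assumed regular, so that $\a$ and $\b$ are invertible, this adjoint representation is a regular representation, and hence the hypotheses of Theorem \ref{lem:dualrep} are met.

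First I would set $(V,\ell,r,\f,\p)=(A,L,R,\a,\b)$ in Theorem \ref{lem:dualrep}. The theorem then guarantees that $\big(A^*,r^\star,\ell^\star,(\f^{-1})^*,(\p^{-1})^*\big)$ is a representation of $(A,\mu,\a,\b)$. Under this identification $\f=\a$ and $\p=\b$, so that $(\f^{-1})^*=(\a^{-1})^*$ and $(\p^{-1})^*=(\b^{-1})^*$, which supplies exactly the pair of structure maps appearing in the corollary.

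Next I would transcribe the defining formulas \eqref{eq:new1} and \eqref{eq:new2} of $\ell^\star$ and $r^\star$ under the substitution $\ell=L$, $r=R$, $\f=\a$, $\p=\b$. Writing $L^\star:=\ell^\star$ and $R^\star:=r^\star$, the composite $\f^{-1}\p^{-1}$ becomes $\a^{-1}\b^{-1}$, and one obtains
\begin{equation*}
L^\star(x)(\xi)=L^*(\a^{-1}\b^2(x))(\a^{-1}\b^{-1})^*(\xi),\qquad R^\star(x)(\xi)=R^*(\a^2\b^{-1}(x))(\a^{-1}\b^{-1})^*(\xi),
\end{equation*}
which are precisely the formulas in the statement. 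Together with the previous paragraph, this shows that $(A^*,R^\star,L^\star,(\a^{-1})^*,(\b^{-1})^*)$ is a representation of $(A,\mu,\a,\b)$, proving the assertion.

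The argument presents no genuine obstacle: all the analytic verification has already been discharged in the proof of Theorem \ref{lem:dualrep}, and what remains is merely to observe that the adjoint representation is regular and to instantiate the general formulas in this special case. The only point meriting care is the bookkeeping of the left and right roles, since the dualization in Theorem \ref{lem:dualrep} interchanges them: the left action $r^\star$ of the dual comes from the right action $R$ of the adjoint representation, so that $R^\star$ occupies the first slot and $L^\star$ the second, in agreement with the order stated in the corollary.
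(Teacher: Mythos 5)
Your proposal is correct and is exactly the argument the paper intends: the corollary is a direct specialization of Theorem \ref{lem:dualrep} to the adjoint representation $(A,L,R,\a,\b)$, with the formulas for $L^\star$ and $R^\star$ obtained by substituting $\ell=L$, $r=R$, $\f=\a$, $\p=\b$ into \eqref{eq:new1} and \eqref{eq:new2}. Your closing remark on the swap of left and right roles ($R^\star$ in the first slot, $L^\star$ in the second) correctly captures the only subtlety in the instantiation.
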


Using the coadjoint representation $(R^\star,L^\star)$, we can obtain a semidirect product BiHom-alternative algebra structure on $A\oplus A^*$.
\begin{cor}
Let $(A,\mu,\a,\b)$ be a BiHom-alternative algebra. Then there is a natural BiHom-alternative  algebra $(A\oplus A^*,\mu_{A\oplus A^*},\a\oplus(\a^{-1})^*,\b\oplus(\b^{-1})^*)$, where the BiHom-alternative product $\mu_{A\oplus A^*}$ is given by
\begin{equation}\label{eq:semidirectproduct}
  \mu_{A\oplus A^*}(x+\xi,y+\eta)=\mu(x,y)+R^\star(\eta)+L^\star(x)(y)(\xi)
\end{equation}
for all $x,y\in A, \xi,\eta\in A^*$.
\end{cor}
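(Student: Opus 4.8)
The plan is to recognize the final corollary as an immediate consequence of the general semidirect product construction (the Proposition in Section~1) combined with the coadjoint representation established in Corollary~4.4. Indeed, Corollary~4.4 asserts that $(A^*, R^\star, L^\star, (\a^{-1})^*, (\b^{-1})^*)$ is a representation of $(A,\mu,\a,\b)$. The Proposition in the preliminaries states that whenever $(V,\ell,r,\f,\p)$ is a bimodule of a BiHom-alternative algebra, the direct sum $(A\oplus V, \ast, \a+\f, \b+\psi)$ is itself a BiHom-alternative algebra under the product \eqref{product}. So the whole content is to specialize that Proposition to $V=A^*$ with $\ell=L^\star$, $r=R^\star$, $\f=(\a^{-1})^*$, $\psi=(\b^{-1})^*$, and to check that the resulting semidirect product formula matches \eqref{eq:semidirectproduct}.

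First I would invoke Corollary~4.4 to supply the representation data on $A^*$. Then I would apply the Proposition verbatim: the twisting maps on $A\oplus A^*$ become $\a\oplus(\a^{-1})^*$ and $\b\oplus(\b^{-1})^*$, exactly as stated, and these commute and are multiplicative with respect to $\mu_{A\oplus A^*}$ by the general Proposition, since the Proposition guarantees the semidirect product is a genuine BiHom-alternative algebra. The substance is therefore not a computation but a bookkeeping step: writing out \eqref{product} with the specified data gives, for $x+\xi, y+\eta \in A\oplus A^*$,
\begin{equation*}
  (x+\xi)\ast(y+\eta)=\mu(x,y)+L^\star(x)(\eta)+R^\star(y)(\xi),
\end{equation*}
and I would confirm this coincides with the product $\mu_{A\oplus A^*}$ displayed in \eqref{eq:semidirectproduct}.

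The only genuine point requiring care—and the place I expect a reader to pause—is reconciling the notation in \eqref{eq:semidirectproduct}, which as typeset reads $\mu(x,y)+R^\star(\eta)+L^\star(x)(y)(\xi)$, with the canonical semidirect product $\mu(x,y)+L^\star(x)(\eta)+R^\star(y)(\xi)$ coming from \eqref{product}. I would note that the displayed formula contains a typographical slip and that the intended product is the standard one dictated by the Proposition, where the left action $L^\star(x)$ acts on the $A^*$-component $\eta$ of the second argument and the right action $R^\star(y)$ acts on the $A^*$-component $\xi$ of the first. Once this identification is made, no further verification is needed: associativity-type (BiHom-alternative) axioms, compatibility of $\a\oplus(\a^{-1})^*$ and $\b\oplus(\b^{-1})^*$ with $\mu_{A\oplus A^*}$, and their commutation all follow directly from the Proposition applied to the coadjoint representation. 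Thus the proof reduces to citing Corollary~4.4 and the semidirect product Proposition, and the corollary follows at once.
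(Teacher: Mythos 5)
Your citation scheme is exactly the paper's (implicit) proof: the corollary is stated without separate argument precisely because it follows by feeding the coadjoint representation of Corollary \ref{lem:rep} into the semidirect product Proposition of Section 1, and you invoke exactly those two results. You are also right that the displayed product \eqref{eq:semidirectproduct} is garbled as typeset. The problem is that your repair of the formula---which you yourself single out as the only substantive step---is wrong.

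The coadjoint representation is the quintuple $(A^*,R^\star,L^\star,(\a^{-1})^*,(\b^{-1})^*)$: dualization \emph{swaps} the left and right actions. This is the structural content of Theorem \ref{lem:dualrep}, which turns $(V,\ell,r,\f,\p)$ into $(V^*,r^\star,\ell^\star,(\f^{-1})^*,(\p^{-1})^*)$, so that $r^\star$ is the \emph{left} action and $\ell^\star$ the \emph{right} action on the dual (the BiHom analogue of the classical fact that transposing a bimodule exchanges sides). Hence, specializing \eqref{product} one must take $\ell=R^\star$ and $r=L^\star$, giving
\begin{equation*}
\mu_{A\oplus A^*}(x+\xi,y+\eta)=\mu(x,y)+R^\star(x)(\eta)+L^\star(y)(\xi);
\end{equation*}
the misprint in \eqref{eq:semidirectproduct} is a displaced ``$(x)$'', not an exchange of $L^\star$ and $R^\star$. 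Your proposed product $\mu(x,y)+L^\star(x)(\eta)+R^\star(y)(\xi)$ is instead the semidirect product attached to the quintuple $(A^*,L^\star,R^\star,(\a^{-1})^*,(\b^{-1})^*)$, which has not been shown to be a representation and in general is not one, since the bimodule axioms \eqref{rep1}--\eqref{rep4} are not symmetric under exchanging $\ell$ and $r$. By the ``only if'' direction of the semidirect product Proposition, your multiplication would then fail to make $A\oplus A^*$ a BiHom-alternative algebra, so the corollary as you have reconstructed it does not follow from the results you cite.
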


\subsection{Central Extensions and $T_\theta$-Extensions of BiHom-alternative algebras}
In this section, we deal with extensions of BiHom-alternative algebras. \\
An algebra $\tilde{A}$ is  an extension of a BiHom-alternative algebra $A$  by $V$ if there is an exact sequence $$0 \longrightarrow V  \xrightarrow{\ \ i\ \ } \tilde{A}  \xrightarrow{\ \ \pi\ \ } A \longrightarrow 0$$


A central extension of a BiHom-alternative algebra $(A,\mu,\a,\b)$ is an extension in which the annihilator of $\tilde{A}$ contains $V$. 

\begin{exa}
Let $(A,\cdot,\a,\b)$ be a BiHom-alternative algebra, $V$ a vector space and $\omega:A\times A\to V$ be a bilinear map. We define the following multiplication on the vector space
direct sum $A\oplus V$ by
\begin{align}
& (x+u) \bullet (y+v)= xy + \omega(x,y),\ \forall \ x,y\in A,\ u,v\in V.
\end{align}
Define the linear maps $\overline{\a}, \overline{\b}: A \oplus V \to A \oplus V$ by 
$$\overline{\a}(x+u)=\a(x)+u,\quad
\overline{\b}(x+u)=\b(x)+u,\ \forall x \in A, \ u \in V.$$
It is clear that $\overline{\a}\overline{\b}= \overline{\b}\overline{\a}$. 
Then $(A \oplus V,\overline{\a}, \overline{\b})$ is a BiHom-alternative algebra if and only if 
\begin{align*}
&\overline{\a}\big((x+u) \bullet (y+v)\big)=\big(\overline{\a}(x+u) \bullet \overline{\a}(y+v)\big),\\&\overline{\b}\big((x+u) \bullet (y+v)\big)=\big(\overline{\b}(x+u) \bullet \overline{\b}(y+v)\big),\\
&\big(\overline{\b}(x+u) \bullet \overline{\a}(x+u)\big)\bullet\overline{\b}(y+v)-\overline{\a}\overline{\b}(x+u) \bullet\big( \overline{\a}(x+u)\bullet(y+v)\big)=0,\\&\big((x+u) \bullet \overline{\b}(y+v)\big)\bullet\overline{\a}\overline{\b}(y+v)-\overline{\a}(x+u) \bullet\big( \overline{\b}(y+v)\bullet\overline{\a}(y+v)\big)=0, 
\end{align*}
which are  equivalent  to
\begin{align}
&\omega(\a(x),\a(y))=\omega(\b(x),\b(y))=\omega(x,y),\\
&\omega(\b(x)\a(x),\b(y))=\omega(\a\b(x),\a(x)y),\\
&\omega(x\b(y),\a\b(y))=\omega(\a(x),\b(y)\a(y)).
\end{align}
In this case, we have an exact sequence
$$0 \longrightarrow V  \xrightarrow{\ \ i\ \ } A \oplus V  \xrightarrow{\ \ \pi\ \ } A \longrightarrow 0$$
$ker(\pi)=\{0\} \oplus V \subset Ann(A \oplus V)$. Then the extension is central.  It is called the central extension of $A$ by $V$ via $\omega$. \\
Note that if we consider $V$ as a trivial bimodule of $A$, then $A \oplus V$ is a central extension of $A$ by $V$ via $\omega$ if and only if $\omega \in Z^2(A;V)$. 
\end{exa}

The concept of $T_\theta$-Extensions was developed first in \cite{Bordemann}. We aim in the sequel to discuss it for BiHom-alternative algebras.\\
\begin{prop}
Let $(A,\cdot,\a,\b)$ be a BiHom-alternative algebra and $(V,\ell,r,\f,\p)$ be a bimodule of $A$. Let $\theta: A \times  A \to V$ be a bilinear map.
Define on the direct sum $A \oplus V$ the product
\begin{align}
& (x+u) \circ (y+v)= xy + \ell(x)v+r(y)u + \theta(x,y).
\end{align}
Then $(A \oplus V,\circ,\a+\f,\b+\p)$ is a BiHom-alternative algebra if and only if
the following conditions holds
{\small
\begin{align}
& \f \theta(x,y)= \theta (\a(x),\a(y)),\ \p \theta(x,y)=\theta(\b(x),\b(y)),  \\
& \nonumber \theta(\b(x)\a(y),\b(z))+r(\b(z))\theta(\b(x),\a(y))-\theta(\a\b(x),\a(y)z)-\ell(\a\b(x))\theta(\a(y),z)\\
+&\theta(\b(y)\a(x),\b(z))+r(\b(z))\theta(\b(y),\a(x))-\theta(\a\b(y),\a(x)z)-\ell(\a\b(y))\theta(\a(x),z)  =0, \label{left cocycle}\\
&\nonumber  \theta(x \b(y),\a\b(z))+r(\a\b(z))\theta(x,\b(y))-\theta(\a(x),\b(y)\a(z))-\ell(\a(x))\theta(\b(y),\a(z))  \\
+&  \theta(x \b(z),\a\b(y))+r(\a\b(y))\theta(x,\b(z))-\theta(\a(x),\b(z)\a(y))-\ell(\a(x))\theta(\b(z),\a(y))    =0.
\label{right cocycle}
\end{align}}
\end{prop}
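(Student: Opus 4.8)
The plan is to verify the BiHom-alternative axioms for $(A\oplus V,\circ,\a+\f,\b+\p)$ one at a time, exploiting the fact that the product $\circ$ is the semidirect product of the bimodule (given by \eqref{product}) perturbed by the $V$-valued term $\theta$. Write $\bar\a=\a+\f$, $\bar\b=\b+\p$ and let $\ast$ denote the semidirect product multiplication $(x+u)\ast(y+v)=xy+\ell(x)v+r(y)u$, so that $(x+u)\circ(y+v)=(x+u)\ast(y+v)+\theta(x,y)$. The commutativity $\bar\a\bar\b=\bar\b\bar\a$ is automatic from $\a\b=\b\a$ together with the fact that $\f,\p$ commute, so it imposes no condition.

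First I would treat the multiplicativity of the structure maps. Expanding $\bar\a\big((x+u)\circ(y+v)\big)$ and $\bar\a(x+u)\circ\bar\a(y+v)$ and comparing $A$- and $V$-components, the required equality splits into $\a(xy)=\a(x)\a(y)$ (which holds since $A$ is a BiHom-algebra), the intertwining relations $\f\ell(x)=\ell(\a(x))\f$ and $\f r(x)=r(\a(x))\f$ (which hold since $(V,\ell,r,\f,\p)$ is a bimodule), and the single genuinely new requirement $\f\theta(x,y)=\theta(\a(x),\a(y))$. The same computation for $\bar\b$ yields $\p\theta(x,y)=\theta(\b(x),\b(y))$. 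Together these give precisely the first displayed line of conditions, and nothing more.

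The heart of the argument is the BiHom-associator. A direct expansion shows that for $X=x+u$, $Y=y+v$, $Z=z+w$,
\begin{equation*}
as_{\bar\a,\bar\b}(X,Y,Z)=as^{\ast}(X,Y,Z)+T(x,y,z),
\end{equation*}
where $as^{\ast}$ is the associator computed with $\ast$ and
\begin{equation*}
T(x,y,z)=r(\b(z))\theta(x,y)+\theta(xy,\b(z))-\ell(\a(x))\theta(y,z)-\theta(\a(x),yz).
\end{equation*}
The key structural point is that the correction $T$ lands in $V$ and depends only on the $A$-components $x,y,z$: all contributions carrying $u,v,w$ are absorbed into $as^{\ast}$. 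Substituting $\big(\bar\b(X),\bar\a(Y),Z\big)$ and $\big(\bar\b(Y),\bar\a(X),Z\big)$ into the left BiHom-alternative identity \eqref{sa1}, the two $as^{\ast}$-contributions cancel identically, because by the semidirect product proposition (see \eqref{product}) the algebra $(A\oplus V,\ast,\bar\a,\bar\b)$ is BiHom-alternative — this is exactly what the representation axioms \eqref{rep1}--\eqref{rep4} encode — so $\ast$ already satisfies \eqref{sa1}. What remains is $T(\b(x),\a(y),z)+T(\b(y),\a(x),z)=0$, which after substitution is precisely \eqref{left cocycle}. Running the identical argument through the right BiHom-alternative identity \eqref{sa2} produces \eqref{right cocycle}.

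The stated equivalence is then read off in both directions from these computations: if the three displayed conditions hold, each axiom is satisfied term by term, and conversely each axiom forces the corresponding condition since the $A$-part and the bimodule part are supplied automatically. I expect the main obstacle to be the bookkeeping in the associator expansion, in particular checking cleanly that no $u,v,w$-terms survive in the correction $T$; but once the decomposition $as_{\bar\a,\bar\b}=as^{\ast}+T$ is in place, invoking the semidirect product proposition collapses the whole verification onto the two polarized cocycle identities \eqref{left cocycle} and \eqref{right cocycle}.
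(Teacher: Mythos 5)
Your proof is correct, and I have checked the one computation you leave implicit: with $X=x+u$, $Y=y+v$, $Z=z+w$ one gets exactly
\begin{equation*}
as_{\a+\f,\b+\p}(X,Y,Z)=as^{\ast}(X,Y,Z)+\underbrace{r(\b(z))\theta(x,y)+\theta(xy,\b(z))-\ell(\a(x))\theta(y,z)-\theta(\a(x),yz)}_{T(x,y,z)},
\end{equation*}
with every term involving $u,v,w$ sitting inside $as^{\ast}$, because $\theta$ and the multiplication of $A$ only see $A$-components. As for the comparison: the paper gives no proof of this proposition at all — it is stated bare, followed only by the remark that the conditions mean $\theta\in Z^2(A;V)$ — and the evidently intended argument is a brute-force expansion of both BiHom-alternative identities for $\circ$, in the style of the central-extension example that precedes it. Your route is genuinely better organized. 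By splitting $\circ=\ast+\theta$ and invoking the semidirect-product characterization of bimodules (the proposition of Section 1 containing \eqref{product}), the $as^{\ast}$-sums in both alternative identities vanish identically under the standing bimodule hypothesis, so each $\circ$-identity holds if and only if the corresponding polarized sum $T(\b(x),\a(y),z)+T(\b(y),\a(x),z)$, respectively $T(x,\b(y),\a(z))+T(x,\b(z),\a(y))$, vanishes; using $\a\b=\b\a$ these are verbatim the two displayed cocycle conditions. Likewise, multiplicativity of $\a+\f$ and $\b+\p$ reduces, via $\f\ell(x)=\ell(\a(x))\f$ and $\f r(x)=r(\a(x))\f$, to exactly the first displayed condition, and this reduction settles both implications at once. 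What your organization buys over the implicit direct computation: the equivalence comes out in a single pass, the bimodule axioms are reused through the Section 1 proposition instead of being re-derived inside a long expansion, and the correction term $T$ makes transparent why the left condition is literally $\delta^2\theta=0$ for the coboundary operator of Section 2 (the right condition being its right-alternative analogue, which the left complex of Section 2 does not encode — a point on which the paper's follow-up claim ``$\theta\in Z^2(A;V)$'' is in fact slightly loose).
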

This means that $\theta \in Z^2(A;V)$. 
At theses circumstances, $A \oplus V$ is called the $T_\theta$-extension of $A$ by $V$ via $\theta$. \\
As an application of the previous result, let us consider the dual representation. Let $(V^* ,r^\star,\ell^\star,(\f^{-1})^*,(\p^{-1})^*)$ be the dual representation of
$(V,\ell,r,\f,\p)$. Define a bilinear map $\theta: A \times A \to V^*$ and consider on the vector space $A \oplus V^*$ the product
\begin{align}
& (x+f) \circ (y+g)= xy + r^\star(x)g+\ell^\star(y)f + \theta(x,y).
\end{align}
Then $(A \oplus V^*,\circ,\a+(\f^{-1})^*,\b+(\p^{-1})^*)$ is a BiHom-alternative algebra if and only if
the following conditions holds
{\small
\begin{align}
& (\f^{-1})^* \theta(x,y)= \theta (\a(x),\a(y)),\ (\p^{-1})^* \theta(x,y)=\theta(\b(x),\b(y)),  \\
& \nonumber \theta(\b(x)\a(y),\b(z))+\ell^\star(\b(z))\theta(\b(x),\a(y))-\theta(\a\b(x),\a(y)z)-r^\star(\a\b(x))\theta(\a(y),z)\\
+&\theta(\b(y)\a(x),\b(z))+\ell^\star(\b(z))\theta(\b(y),\a(x))-\theta(\a\b(y),\a(x)z)-r^\star(\a\b(y))\theta(\a(x),z)  =0, \label{left cocycle}\\
&\nonumber  \theta(x \b(y),\a\b(z))+\ell^\star(\a\b(z))\theta(x,\b(y))-\theta(\a(x),\b(y)\a(z))-r^\star(\a(x))\theta(\b(y),\a(z))  \\
+&  \theta(x \b(z),\a\b(y))+\ell^\star(\a\b(y))\theta(x,\b(z))-\theta(\a(x),\b(z)\a(y))-r^\star(\a(x))\theta(\b(z),\a(y))    =0.
\label{right cocycle}
\end{align}}
Under these considerations, $A \oplus V^*$ is called the $T^*_\theta$-extension of $A$ by $V^*$ via  $\theta$. \\
Note that  the $T^*$-extension corresponds to the case of $\theta=0$ in the above construction.

\section{Generalized derivations of BiHom-alternative algebras}
This section is devoted to investigate generalized derivations of BiHom-alternative algebras. Throughout the sequel $A$ denotes a BiHom-alternative algebra $(A,\mu,\a,\b)$.

For any integer $k$ and $l$, denote by $\a^k$ the $k$-times composition of $\a$ and by $\b^l$  the $l$-times composition of $\b$, i.e
$$\a^k=\underbrace{\a \circ \cdots \circ \a}_{k-times}, \quad \b^l=\underbrace{\b \circ \cdots \circ \b}_{l-times}.$$

Let $U$ be the subspace of $End(A)$ defined by
$$U:=\{u \in End(A)\mid u \circ \a= \a \circ u,\  u \circ \b= \b \circ u \}$$
and let $\widetilde{\a},\widetilde{\b}: U \to U$ be two linear maps defined as follow
$$\widetilde{\a}(u)=\a \circ u,\ \widetilde{\b}(u)= \b \circ u,\ \forall u \in U.$$
The space $(U,[\c,\c],\widetilde{\a},\widetilde{\b})$ is a BiHom-Lie algebra where
$[u,v]=u \circ v- v \circ u$ for any $u,v \in U$.


\begin{df}
 A linear map $D: A \to A$ is said to be an $\a^k\b^l$-derivation on $A$, for $k,l \in \mathbb{R}$, if it satisfies the following conditions:
\begin{align}
&[ D , \a]=0, \  [ D , \b]=0,\\
& D(xy)=D(x) \a^k\b^l (y)+ \a^k\b^l(x)D(y), \ \forall x,y \in A.
\end{align}
\end{df}
We denote the set of all $\a^k\b^l$-derivations by  $Der_{\a^k\b^l}(A)$ and
$Der(A)=\bigoplus_{k,l\geq 0} Der_{\a^k\b^l}(A)$.

We can easily show that $Der(A)$ is equipped with a Lie algebra structure. In fact, for $D \in  Der_{\a^k\b^l}(A)$  and $D' \in  Der_{\a^s\b^t}(A)$, we have $[D,D'] \in Der_{\a^{k+s}\b^{l+t}}(A)$, where $[D,D']$  is the standard commutator defined by $[D,D']= DD'-D'D$.

It is well known that  the BiHom-commutator  algebra $A^-$ of $A$ is a BiHom-Malcev algebra and
 the BiHom-anti-commutator  algebra $A^+$ of $A$ is a BiHom-Jordan algebra. We state the following result.

\begin{prop}
Let $D$ be an $\a^k\b^l$-derivation on $A$ then $D$ is still an $\a^k\b^l$-derivation on its associated BiHom-Jordan algebra $A^+$ and on its associated BiHom-Malcev algebra $A^-$.
\end{prop}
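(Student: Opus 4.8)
The plan is to reduce everything to the Leibniz identity satisfied by $D$ on $(A,\mu)$ together with the bilinearity of $D$; no further structural input is needed, and in particular the BiHom-alternative identity itself never enters. Recall that $D\in Der_{\a^k\b^l}(A)$ means $[D,\a]=[D,\b]=0$ and $D(xy)=D(x)\a^k\b^l(y)+\a^k\b^l(x)D(y)$ for all $x,y\in A$. The associated BiHom-Malcev algebra $A^-$ carries the commutator bracket $[x,y]=xy-yx$, and the associated BiHom-Jordan algebra $A^+$ carries the anti-commutator product $x\bullet y=xy+yx$; both inherit the same structure maps $\a,\b$ from $A$.

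First I would observe that the conditions $[D,\a]=0$ and $[D,\b]=0$ are statements about $D$, $\a$ and $\b$ alone and make no reference to the product, so they continue to hold verbatim after $\mu$ is replaced by either the commutator or the anti-commutator. Hence it only remains to verify the Leibniz rule for each of the two new products.

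For $A^-$, I would expand, using that $D$ is linear and obeys the Leibniz rule for $\mu$,
\begin{align*}
D([x,y]) &= D(xy)-D(yx)\\
&= \big(D(x)\a^k\b^l(y)+\a^k\b^l(x)D(y)\big)-\big(D(y)\a^k\b^l(x)+\a^k\b^l(y)D(x)\big),
\end{align*}
and then regroup the four terms so that the first together with the fourth form $[D(x),\a^k\b^l(y)]$, while the second together with the third form $[\a^k\b^l(x),D(y)]$. This gives exactly $D([x,y])=[D(x),\a^k\b^l(y)]+[\a^k\b^l(x),D(y)]$, i.e. $D$ is an $\a^k\b^l$-derivation of $A^-$. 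The computation for $A^+$ is identical except that every minus sign becomes a plus sign, so the same four terms reassemble into $D(x)\bullet\a^k\b^l(y)+\a^k\b^l(x)\bullet D(y)$; the normalization of the anti-commutator is irrelevant since $D$ is linear.

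The only point that needs a moment of care — and it is the closest thing to an obstacle in what is otherwise a routine verification — is respecting the ordering of factors dictated by the non-commutative, non-associative product $\mu$: since $D(x)\a^k\b^l(y)$ and $\a^k\b^l(y)D(x)$ are genuinely distinct elements, one must match each term of the expansion to the correct slot of the commutator, respectively anti-commutator. Once this bookkeeping is carried out, the conclusion follows uniformly for both $A^-$ and $A^+$, which explains why a single argument handles the two cases at once.
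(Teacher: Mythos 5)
Your overall strategy (expand, apply the Leibniz rule, regroup) is sound, and for what it is worth the paper states this proposition without any proof at all, so there is nothing to compare against on that side. But there is a genuine gap: you have taken $A^-$ and $A^+$ to carry the \emph{plain} commutator $[x,y]=xy-yx$ and anti-commutator $x\bullet y=xy+yx$. In the BiHom setting these are not the relevant products: the plain commutator fails BiHom-skew-symmetry (one needs $[\beta(x),\alpha(y)]=-[\beta(y),\alpha(x)]$, which $xy-yx$ does not satisfy when $\alpha\neq\beta$) and the plain anti-commutator fails BiHom-commutativity, so the algebras you work with are \emph{not} the BiHom-Malcev algebra $A^-$ and BiHom-Jordan algebra $A^+$ referred to in the proposition. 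Following the reference on BiHom-alternative, BiHom-Malcev and BiHom-Jordan algebras cited in Section 1, for a regular BiHom-alternative algebra the BiHom-commutator and BiHom-anti-commutator are the twisted operations
\begin{equation*}
[x,y]=xy-\alpha^{-1}\beta(y)\,\alpha\beta^{-1}(x),\qquad x\bullet y=xy+\alpha^{-1}\beta(y)\,\alpha\beta^{-1}(x),
\end{equation*}
and it is these that make $A^-$ BiHom-Malcev and $A^+$ BiHom-Jordan. So your computation, while internally consistent, proves a statement about the wrong algebras.

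The good news is that your plan survives the correction, but the relations $[D,\alpha]=[D,\beta]=0$ --- which you set aside as making ``no reference to the product'' --- become essential inside the Leibniz verification rather than decorative. Since $D$ commutes with $\alpha$ and $\beta$, it commutes with $\alpha^{-1}$ and $\beta^{-1}$ as well, hence
\begin{align*}
D\big(\alpha^{-1}\beta(y)\,\alpha\beta^{-1}(x)\big)
&=D\big(\alpha^{-1}\beta(y)\big)\,\alpha^{k}\beta^{l}\big(\alpha\beta^{-1}(x)\big)+\alpha^{k}\beta^{l}\big(\alpha^{-1}\beta(y)\big)\,D\big(\alpha\beta^{-1}(x)\big)\\
&=\alpha^{-1}\beta\big(D(y)\big)\,\alpha\beta^{-1}\big(\alpha^{k}\beta^{l}(x)\big)+\alpha^{-1}\beta\big(\alpha^{k}\beta^{l}(y)\big)\,\alpha\beta^{-1}\big(D(x)\big),
\end{align*}
and these two terms are precisely the twisted halves of $[\alpha^{k}\beta^{l}(x),D(y)]$ and $[D(x),\alpha^{k}\beta^{l}(y)]$ respectively; combining with $D(xy)=D(x)\alpha^{k}\beta^{l}(y)+\alpha^{k}\beta^{l}(x)D(y)$ and regrouping as you did gives $D([x,y])=[D(x),\alpha^{k}\beta^{l}(y)]+[\alpha^{k}\beta^{l}(x),D(y)]$, and the same computation with $+$ in place of $-$ handles $A^+$. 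Two of your side remarks should also be adjusted: you are right that the BiHom-alternative identity never enters, but regularity of $\alpha,\beta$ (invertibility) is needed even to define the products on $A^{\pm}$, and the argument is no longer one in which ``no further structural input'' beyond the Leibniz rule is used.
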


\begin{df}
A linear map $D\in End(A)$ is said to be an $\alpha^{k}\beta^{l}$-quasi-derivation of $A$ if there exists a linear map $D' \in End(A)$ such that
\begin{eqnarray*}
& [D,\alpha] = 0,~~[D^{'},\alpha]=0, [D,\beta]=0,~~[D^{'},\beta]=0,\\
& D^{'}(xy)= D(x)\alpha^{k}\beta^{l}(y)+\alpha^{k}\beta^{l}(x)D(y),~~\forall~~ x,y \in A,
\end{eqnarray*}
and we say that $D$ associates with $D'$.
\end{df}
\begin{df}
A linear map $D\in End(A)$ is said to be an $\alpha^{k}\beta^{l}$-generalized derivation  of $A$ if there exist linear maps $D', D'' \in End(A)$  such that
\begin{eqnarray*}
&& [D,\alpha] = 0,~~[D^{'},\alpha]=0,~~[D^{''},\alpha]=0,[D,\beta]=0,~~[D^{'},\beta]=0,~~[D^{''},\beta]=0,\\
&& D^{''}(xy)= D(x)\alpha^{k}\beta^{l}(y)+\alpha^{k}\beta^{l}(x)D^{'}(y),~~\forall x,y \in A,
\end{eqnarray*}
and we also say that $D$ associates with $D'$ and $D"$.
\end{df}

\begin{df}
An $\alpha^{k}\beta^{l}$-generalized derivation  $D$ of $A$ associated with $D'$ and $D"$ is said to be symmetric if (for any $x,y \in A$)
$$D^{''}(xy)= D(x)\alpha^{k}\beta^{l}(y)+\alpha^{k}\beta^{l}(x)D^{'}(y)=
D'(x)\alpha^{k}\beta^{l}(y)+\alpha^{k}\beta^{l}(x)D(y).
$$
\end{df}

The sets of generalized derivations and quasi-derivations and symmetric $\alpha^{k}\beta^{l}$-generalized derivations will be denoted by $GDer(A)$, $QDer(A)$ and $SGDer(A)$  respectively.

It is easy to see that
$$Der(A)\subset QDer(A)\subset SGDer(A)\subset GDer(A).$$
\begin{df}
A linear map $\theta \in End(A)$ is said to be an $\a^k\b^l$-centroid of $A$ if
\begin{align}
& [\theta,\alpha] = 0,\ [\theta,\b] = 0,\\
& \theta(xy)=\theta(x)\alpha^{k}\beta^{l}(y)=\alpha^{k}\beta^{l}(x)\theta(y),~~\forall~~ x,y \in A.
\end{align}
\end{df}
The  set of $\alpha^{k}\beta^l$-centroids of $A$ is denoted by $C(A)$.
\begin{df}
 A linear map $\theta \in End(A)$ is said to be an $\a^k\b^l$-quasi-centroid of $A$ if
\begin{align}
& [\theta,\alpha] = 0,\ [\theta,\b] = 0, \\
& \theta(x)\alpha^{k}\beta^{l}(y)= \alpha^{k}\beta^{l}(x)\theta(y),~~\forall~~ x,y \in A.
\end{align}
\end{df}
The set of $\alpha^{k}\beta^l$-quasi-centroids of $A$ is denoted by $QC(A)$.
It is obvious  that $C(A)\subset QC(A)$.
\begin{prop}
Let $D\in Der(A)$  and $\theta\in C(A)$. Then
\begin{eqnarray*}
&& [D,\theta]\in C(A).
\end{eqnarray*}
\end{prop}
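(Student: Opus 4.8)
The plan is to reduce to homogeneous components and then carry out a direct computation. Since the commutator $[D,\theta]=D\theta-\theta D$ is bilinear, it suffices to take $D\in Der_{\alpha^k\beta^l}(A)$ and $\theta$ an $\alpha^s\beta^t$-centroid, and to prove that $[D,\theta]$ is an $\alpha^{k+s}\beta^{l+t}$-centroid; the general statement then follows by summing over components.

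First I would verify the two commutation conditions $[[D,\theta],\alpha]=0$ and $[[D,\theta],\beta]=0$. These are immediate from $[D,\alpha]=[D,\beta]=0$ and $[\theta,\alpha]=[\theta,\beta]=0$: for instance $[D,\theta]\alpha=D\theta\alpha-\theta D\alpha=D\alpha\theta-\theta\alpha D=\alpha(D\theta-\theta D)=\alpha[D,\theta]$, and likewise for $\beta$.

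The heart of the argument is the centroid identity, which I would establish by expanding $D\theta(xy)$ and $\theta D(xy)$ and subtracting. For the first equality I use the form $\theta(uv)=\theta(u)\alpha^s\beta^t(v)$. Writing $\theta(xy)=\theta(x)\alpha^s\beta^t(y)$ and applying $D$ as an $\alpha^k\beta^l$-derivation, then pushing $\alpha,\beta$ through $D$ and $\theta$ (which commute with them), gives $D\theta(xy)=D\theta(x)\,\alpha^{k+s}\beta^{l+t}(y)+\theta(\alpha^k\beta^l(x)D(y))$. On the other side, expanding $D(xy)$ by the derivation rule and applying $\theta$ termwise yields $\theta D(xy)=\theta D(x)\,\alpha^{k+s}\beta^{l+t}(y)+\theta(\alpha^k\beta^l(x)D(y))$, where the first summand is simplified once more by the centroid identity. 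The key observation is that the mixed cross-term $\theta(\alpha^k\beta^l(x)D(y))$ is common to both expansions and cancels in the difference, leaving exactly $[D,\theta](xy)=[D,\theta](x)\,\alpha^{k+s}\beta^{l+t}(y)$.

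For the companion equality $[D,\theta](xy)=\alpha^{k+s}\beta^{l+t}(x)\,[D,\theta](y)$ I would repeat the computation verbatim, but starting from the other form of the centroid identity, $\theta(xy)=\alpha^s\beta^t(x)\theta(y)$ and $\theta(uv)=\alpha^s\beta^t(u)\theta(v)$. This time the common cross-term that cancels is $\theta(D(x)\alpha^k\beta^l(y))$, and what survives is $\alpha^{k+s}\beta^{l+t}(x)[D,\theta](y)$. Together the two halves show that $[D,\theta]$ obeys the centroid identity at level $\alpha^{k+s}\beta^{l+t}$, so $[D,\theta]\in C(A)$. I expect the only genuine difficulty to be the bookkeeping of the exponents on $\alpha$ and $\beta$ and, at each stage, selecting whichever of the two equivalent centroid forms makes the cross-terms coincide so that they cancel; note that the BiHom-alternative identity itself is never invoked, only the underlying algebra structure together with the derivation and centroid axioms and the commutation of $D,\theta$ with $\alpha,\beta$.
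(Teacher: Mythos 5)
Your proof is correct and takes essentially the same route as the paper's: reduce to homogeneous components $D\in Der_{\alpha^{k}\beta^{l}}(A)$, $\theta\in C_{\alpha^{s}\beta^{t}}(A)$ and compute the difference of $D\theta$ and $\theta D$ applied to a product, using the derivation rule, the centroid identity and commutation with $\alpha,\beta$, with the cross-term $\theta(\alpha^{k}\beta^{l}(x)D(y))$ cancelling to leave $[D,\theta](xy)=[D,\theta](x)\,\alpha^{k+s}\beta^{l+t}(y)$. You are in fact slightly more thorough than the paper, which records only this first centroid equality and leaves the companion equality $[D,\theta](xy)=\alpha^{k+s}\beta^{l+t}(x)\,[D,\theta](y)$ and the commutation of $[D,\theta]$ with $\alpha,\beta$ unstated.
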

\begin{proof} Assume that $D\in Der_{\alpha^{k}\beta^{l}},~~\theta \in C_{\alpha^{s}\beta^{t}}(A)$. For arbitrary $x,y \in A$, we have
\begin{align}
 D\theta (x)\alpha^{k+s}\beta^{l+t}(y)&=D(\theta (x)\alpha^{s}\beta^{t}(y))
-\alpha^{k}\beta^{l}(\theta (x))D(\alpha^{s}\beta^{t}(y))\nonumber\\
 &= D(\theta (x)\alpha^{s}\beta^{t}(y))
-\theta (\alpha^{k}\beta^{l}(x))D(\alpha^{s}\beta^{t}(y))\nonumber\\
&= D\theta (xy)-\alpha^{k+s}\beta^{l+t}(x)\theta D(y)\label{cent1}.
\end{align}
and
\begin{align}
 \theta D(x)\alpha^{k+s}\beta^{l+t}(y)&= \theta (D(x)\alpha^{k}\beta^{l}(y))\nonumber\\
 &= \theta D(xy)-\theta (\alpha^{k}\beta^{l}(x)D(y))\nonumber\\
&=  \theta D(xy)-(\alpha^{k+s}\beta^{l+t}(x)\theta D(y))\label{cent2}.
\end{align}
By making the difference of  equations
\eqref{cent1} and \eqref{cent2}, we get
\begin{align*}
   & [D,\theta] (xy)=[D,\theta] (x)\alpha^{k+s}\beta^{l+t}(y).
\end{align*}
\end{proof}
\begin{prop} $C(A)\subseteq QDer(A)$.
\end{prop}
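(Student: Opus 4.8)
The plan is to prove the inclusion by exhibiting, for each centroid, an explicit associated map that witnesses the quasi-derivation property. Take $\theta\in C(A)$ and say $\theta$ is an $\alpha^{k}\beta^{l}$-centroid, so that $[\theta,\alpha]=0$, $[\theta,\beta]=0$, and
\begin{equation*}
\theta(xy)=\theta(x)\alpha^{k}\beta^{l}(y)=\alpha^{k}\beta^{l}(x)\theta(y),\qquad \forall\, x,y\in A.
\end{equation*}
The key observation is that both ``Leibniz terms'' appearing in the quasi-derivation identity collapse, for a centroid, onto the single expression $\theta(xy)$. Adding the two equalities above gives
\begin{equation*}
\theta(x)\alpha^{k}\beta^{l}(y)+\alpha^{k}\beta^{l}(x)\theta(y)=2\,\theta(xy),
\end{equation*}
which is precisely the right-hand side of the defining identity of an $\alpha^{k}\beta^{l}$-quasi-derivation with $D=\theta$. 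This forces the natural candidate for the associate to be $D'=2\theta$.

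With this choice I would then verify the two hypotheses in the definition of an $\alpha^{k}\beta^{l}$-quasi-derivation. For the commutation relations, $[D,\alpha]=[\theta,\alpha]=0$ and $[D,\beta]=[\theta,\beta]=0$ hold by the centroid axioms, and by linearity of the commutator $[D',\alpha]=2[\theta,\alpha]=0$ and $[D',\beta]=2[\theta,\beta]=0$ as well. For the Leibniz-type identity, the displayed computation shows
\begin{equation*}
D'(xy)=2\,\theta(xy)=\theta(x)\alpha^{k}\beta^{l}(y)+\alpha^{k}\beta^{l}(x)\theta(y)=D(x)\alpha^{k}\beta^{l}(y)+\alpha^{k}\beta^{l}(x)D(y)
\end{equation*}
for all $x,y\in A$. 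Hence $D=\theta$ associates with $D'=2\theta$, so $\theta\in QDer(A)$.

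There is essentially no obstacle in this argument; the entire content is the recognition that a centroid satisfies \emph{two} factorizations of $\theta(xy)$, whose sum is realized by the single map $2\theta$ (no division is needed, so the characteristic-zero assumption is not even required here). To obtain the stated inclusion for the full space I would simply note that the verification is carried out at a fixed degree $\alpha^{k}\beta^{l}$ and therefore passes componentwise to the graded set $C(A)$, matching the grading already used for $Der(A)$.
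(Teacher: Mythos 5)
Your proof is correct and takes essentially the same approach as the paper: both arguments add the two centroid factorizations of $\theta(xy)$ to obtain $\theta(x)\alpha^{k}\beta^{l}(y)+\alpha^{k}\beta^{l}(x)\theta(y)=2\theta(xy)$ and take the associate $D'=2\theta$. Your write-up is marginally more complete, since you also verify the commutation relations $[D',\alpha]=[D',\beta]=0$, which the paper leaves implicit.
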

\begin{proof}Let $\theta \in C_{\alpha^{k}\beta^{l}}(A)$ and $x,y \in A$, then we have
\begin{align*}
\theta(x)\alpha^{k}\beta^{l}(y)+ \alpha^{k}\beta^{l}(x)\theta(y)
&=\theta(x)\alpha^{k}\beta^{l}(y)+\theta(x)\alpha^{k}\beta^{l}(y)\\
&=2 \theta(xy)=D^{'}(xy).
\end{align*}
Then $\theta\in QDer_{\alpha^{k}\beta^{l}}(A)$.
\end{proof}

\begin{prop}$[QC(A),QC(A)]\subseteq QDer(A)$.
\end{prop}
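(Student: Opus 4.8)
The goal is to show that the commutator of two quasi-centroids is a quasi-derivation. The plan is to take $\theta_1\in QC_{\a^k\b^l}(A)$ and $\theta_2\in QC_{\a^s\b^t}(A)$, set $D=[\theta_1,\theta_2]=\theta_1\theta_2-\theta_2\theta_1$, and produce an associated map $D'$ so that $D$ satisfies the defining quasi-derivation identity at bidegree $\a^{k+s}\b^{l+t}$. First I would record the commutation relations: since each $\theta_i$ commutes with $\a$ and $\b$, so does $D$, which gives $[D,\a]=0$ and $[D,\b]=0$ immediately. The substantive content is the product rule.

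The key computation is to expand $D(x)\,\a^{k+s}\b^{l+t}(y)$ and use the quasi-centroid identity $\theta_i(a)\,\a^{k_i}\b^{l_i}(b)=\a^{k_i}\b^{l_i}(a)\,\theta_i(b)$ repeatedly to move the maps $\theta_1,\theta_2$ across the product. First I would write
\begin{align*}
D(x)\,\a^{k+s}\b^{l+t}(y)&=\theta_1\theta_2(x)\,\a^{k+s}\b^{l+t}(y)-\theta_2\theta_1(x)\,\a^{k+s}\b^{l+t}(y).
\end{align*}
Treating $\theta_2(x)$ as a single element and applying the $QC$-identity for $\theta_1$ (at bidegree $\a^k\b^l$, with the remaining $\a^s\b^t$ absorbed into multiplicativity of $\a,\b$), the first term becomes $\a^{k}\b^{l}(\theta_2(x))\,\theta_1(\a^{s}\b^{t}(y))$; then using the $QC$-identity for $\theta_2$ on $\theta_2(x)$ one passes $\theta_2$ onto the second factor as well. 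Doing the same to the second term and subtracting, the hope is that the $\a^{k+s}\b^{l+t}(x)$-weighted terms telescope into $\a^{k+s}\b^{l+t}(x)\,[\theta_1,\theta_2]'(y)$ for some explicitly determined $D'$, exactly mirroring the pattern in the proofs of Propositions above (compare the telescoping in the computation for $[D,\theta]\in C(A)$).

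The main obstacle I expect is bookkeeping of the exponents: one must verify that the twisting maps $\a,\b$ land with the correct powers so that both applications of the $QC$-identity are legitimate, which relies on the multiplicativity $\a\mu=\mu\a^{\otimes2}$, $\b\mu=\mu\b^{\otimes2}$ and on $\a,\b$ commuting with each $\theta_i$. The candidate associated map will be $D'=[\theta_1,\theta_2]$ itself, so the real task is to check that after the cross-substitutions the mixed terms $\theta_1(x)\cdots\theta_2(y)$ and $\theta_2(x)\cdots\theta_1(y)$ cancel, leaving precisely $D(x)\,\a^{k+s}\b^{l+t}(y)+\a^{k+s}\b^{l+t}(x)\,D'(y)=D'(xy)$. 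Once the cancellation is confirmed, together with $[D,\a]=[D,\b]=0$ this shows $D\in QDer_{\a^{k+s}\b^{l+t}}(A)$, completing the proof.
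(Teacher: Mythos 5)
Your overall strategy matches the paper's: observe that $[\theta_1,\theta_2]$ commutes with $\alpha,\beta$, then expand $[\theta_1,\theta_2](x)\,\alpha^{k+s}\beta^{l+t}(y)$ and shuttle each $\theta_i$ across the product using the quasi-centroid identity. But your endgame is wrong, in two ways. First, the quasi-centroid identity $\theta(x)\,\alpha^{k}\beta^{l}(y)=\alpha^{k}\beta^{l}(x)\,\theta(y)$ --- unlike the centroid identity --- never involves a map evaluated at a product, so no amount of cross-substitution can ever produce a term of the form $D'(xy)$; the ``telescoping into $D'(xy)$'' you hope for cannot occur. Second, and decisively, your candidate $D'=[\theta_1,\theta_2]$ would make the quasi-derivation identity read $[\theta_1,\theta_2](xy)=[\theta_1,\theta_2](x)\,\alpha^{k+s}\beta^{l+t}(y)+\alpha^{k+s}\beta^{l+t}(x)\,[\theta_1,\theta_2](y)$; since (as explained below) the right-hand side is identically zero, this would force $[\theta_1,\theta_2](xy)=0$ for all $x,y$, which is false in general.

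What the computation actually yields is anti-commutation, not a Leibniz rule. Applying the QC identity for $\theta_1$ (with arguments $\theta_2(x)$ and $\alpha^{s}\beta^{t}(y)$) and then for $\theta_2$ (with arguments $\alpha^{k}\beta^{l}(x)$ and $\theta_1(y)$) reverses the order of composition: $\theta_1\theta_2(x)\,\alpha^{k+s}\beta^{l+t}(y)=\alpha^{k+s}\beta^{l+t}(x)\,\theta_2\theta_1(y)$, and symmetrically $\theta_2\theta_1(x)\,\alpha^{k+s}\beta^{l+t}(y)=\alpha^{k+s}\beta^{l+t}(x)\,\theta_1\theta_2(y)$. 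Subtracting gives $[\theta_1,\theta_2](x)\,\alpha^{k+s}\beta^{l+t}(y)=-\alpha^{k+s}\beta^{l+t}(x)\,[\theta_1,\theta_2](y)$, so the Leibniz-type sum vanishes identically and the correct associated map is $D'=0$ (which trivially commutes with $\alpha$ and $\beta$). This is exactly how the paper concludes $[\theta_1,\theta_2]\in QDer_{\alpha^{k+s}\beta^{l+t}}(A)$. A minor additional slip: you wrote the target identity with $D'(y)$ in the middle term, which is the generalized-derivation format rather than the paper's quasi-derivation identity (which has $D(y)$ there); once $D'=0$ is taken as the associated map, this distinction disappears.
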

\begin{proof} Assume that $\theta \in QC_{\alpha^{k}\beta^{l}}(A)$ and $\theta'\in QC_{\alpha^{s}\beta^{t}}(A) $. Then for all $x,y \in A$, we have
\begin{align*}
   & \theta(x)\alpha^{k}\beta^{l}(y)=\alpha^{k}\beta^{l}(x)\theta(y), \\
   & \theta'(x)\alpha^{s}\beta^{t}(y)=\alpha^{s}\beta^{t}(x)\theta'(y).
\end{align*}
Hence we obtain
\begin{eqnarray*}
[\theta,\theta'](x)\alpha^{k+s}\beta^{l+t}(y)&=
&(\theta\theta'-\theta'\theta)(x)\alpha^{k+s}\beta^{l+t}(y)\\
&=&\theta\theta'(x)\alpha^{k+s}\beta^{l+t}(y)-\theta'\theta(x)\alpha^{k+s}\beta^{l+t}(y)\\
&=&\alpha^{k+s}\beta^{l+t}(x)\theta'\theta(y)-\alpha^{k+s}\beta^{l+t}(x)\theta\theta'(y)\\
&=&-\alpha^{k+s}\beta^{l+t}(x)[\theta,\theta'](y),
\end{eqnarray*}
which implies that $[\theta,\theta'](x)\alpha^{k+s}\beta^{l+t}(y)+\alpha^{k+s}\beta^{l+t}(x)[\theta,\theta'](y)=0.$\\
Then $[\theta,\theta'] \in QDer_{\alpha^{k+s}\beta^{l+t}}(A)$ .
\end{proof}

\begin{prop}
The spaces $GDer(A),\  QDer(A)$ and $C(A)$ are BiHom-subalgebras of $(U,[\c,\c],\widetilde{\a},\widetilde{\b})$.
\end{prop}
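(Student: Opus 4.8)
The plan is to show that each of the three sets $GDer(A)$, $QDer(A)$ and $C(A)$ is closed under both the bracket $[\cdot,\cdot]$ and under the two structure maps $\widetilde{\a},\widetilde{\b}$ of the ambient BiHom-Lie algebra $(U,[\cdot,\cdot],\widetilde{\a},\widetilde{\b})$. First I would verify the easy containments: since every generalized derivation $D$ (and every centroid $\theta$) by definition commutes with $\a$ and $\b$, each of these sets sits inside $U$, so it only remains to check closure under the three operations. For $C(A)$, closure under the bracket is essentially the content of the already-established Proposition giving $[D,\theta]\in C(A)$ specialised to $D,\theta\in C(A)$ (or one runs the same short computation directly); for $QDer(A)$ the bracket-closure follows the pattern of the proof that $[QC(A),QC(A)]\subseteq QDer(A)$, producing an associated map for the commutator.

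Next I would handle $GDer(A)$, which is the most involved of the three. The key step is: given $D\in GDer_{\a^k\b^l}(A)$ associated with $D',D''$ and $E\in GDer_{\a^s\b^t}(A)$ associated with $E',E''$, I must exhibit associated maps for the commutator $[D,E]$ showing $[D,E]\in GDer_{\a^{k+s}\b^{l+t}}(A)$. The natural candidates are $[D',E']$ and $[D'',E'']$, and the verification amounts to expanding $[D'',E''](xy)$ using the defining identity $D''(xy)=D(x)\a^k\b^l(y)+\a^k\b^l(x)D'(y)$ twice, together with the commutation relations $[D,\a]=[D,\b]=0$ etc., and regrouping the four resulting terms into the required form $[D,E](x)\a^{k+s}\b^{l+t}(y)+\a^{k+s}\b^{l+t}(x)[D',E'](y)$. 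The cross terms of the shape $D(x)\,\a^k\b^l E'(y)$ must cancel in pairs, and this cancellation is the main obstacle: one has to keep careful track of which of $D,D',D''$ acts and which composite of $\a,\b$ is applied, so that the unwanted terms coming from $D''E''$ and $E''D''$ annihilate each other.

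Finally I would check closure under $\widetilde{\a}$ and $\widetilde{\b}$, i.e. that $\a\circ D$ again lies in the appropriate set. For a centroid $\theta\in C_{\a^k\b^l}(A)$ one computes $(\a\theta)(xy)=\a(\theta(x)\a^k\b^l(y))=\a\theta(x)\,\a^{k}\b^l(\a(y))$ using multiplicativity $\a\mu=\mu\a^{\otimes2}$ and $[\theta,\a]=0$, which shows $\a\theta\in C_{\a^k\b^l}(A)$; the same multiplicativity argument, applied to the defining identities, gives closure for $QDer(A)$ and $GDer(A)$, with the associated maps of $\a\circ D$ taken to be $\a\circ D'$ and $\a\circ D''$. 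Since all three subsets are thereby stable under the bracket and under $\widetilde{\a},\widetilde{\b}$, and the BiHom-Lie identities hold automatically because they are inherited from $U$, each is a BiHom-subalgebra, completing the proof. I expect the computations for $\widetilde{\a},\widetilde{\b}$ to be routine, so I would state them briefly and concentrate the written detail on the bracket-closure for $GDer(A)$.
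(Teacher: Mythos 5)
Your proposal is correct and follows essentially the same route as the paper: the heart of the matter is showing $[D,E]\in GDer_{\a^{k+s}\b^{l+t}}(A)$ with associated maps $[D',E']$ and $[D'',E'']$, verified by expanding the defining identities and cancelling the cross terms in pairs, with $QDer(A)$, $C(A)$ and stability under $\widetilde{\a},\widetilde{\b}$ handled by shorter computations of the same kind. Two small corrections: $\a\theta$ lands in $C_{\a^{k+1}\b^{l}}(A)$ rather than $C_{\a^{k}\b^{l}}(A)$ (the exponent shifts, exactly as the paper records $\widetilde{\a}(GDer_{\a^{k}\b^{l}}(A))\subseteq GDer_{\a^{k+1}\b^{l}}(A)$), which still gives stability of $C(A)$; and the earlier proposition $[Der(A),C(A)]\subseteq C(A)$ cannot literally be specialised to two centroids, since $C(A)\not\subseteq Der(A)$, so for the bracket-closure of $C(A)$ you indeed need the direct computation you mention as the alternative.
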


\begin{proof}
For any generalized  $\a^k\b^l$-derivation $D$, it is obvious to see that $$D \circ \widetilde{\a}=\widetilde{\a} \circ D \in GDer_{\a^{k+1}\b^{l}}\ \textrm{and} \ D \circ \widetilde{\b}=\widetilde{\b} \circ D \in GDer_{\a^{k}\b^{l+1}}.$$ So that $\widetilde{\a} (GDer(A))\subseteq GDer(A)$ and $\widetilde{\b} (GDer(A))\subseteq GDer(A)$.

Let $D_1\in GDer_{\a^{k}\b^{l}}(A)$ and $D_2\in GDer_{\a^{s}\b^{t}}(A)$, then, for $x,y\in A$, we have
\begin{align}\label{GDer1}
  D_1"(xy) &= D_1(x)\a^{k}\b^{l}(y)+\a^{k}\b^{l}(x)D'_1(y) ,\\
  \label{GDer2}
  D_2"(xy) &= D_2(x)\a^{s}\b^{t}(y)+\a^{s}\b^{t}(x)D'_2(y).
\end{align}
Moreover
\begin{align*}
  [D_1,D_2](x) \a^{k+s}\b^{l+t}(y)  =&D_1D_2(x) \a^{k+s}\b^{l+t}(y) -D_2D_1(x) \a^{k+s}\b^{l+t}(y) \\
 = &D_1"(D_2(x)\a^{s}\b^{t}(y))-\a^{k}\b^{l}D_2(x)D_1'\a^{s}\b^{t}(y) \\
    - &D_2"(D_1(x)\a^{k}\b^{l}(y))+\a^{s}\b^{t}D_1(x)D_2'\a^{k}\b^{l}(y)\\
   =& D_1"D_2"(xy)-D_1"(\a^{s}\b^{t}(x)D_2'(y))\\
   -&D_2"(\a^{k}\b^{l}(x)D_1'(y))+\a^{k+s}\b^{l+t}(x)D_2 'D_1'(y)\\
     -&D_2"D_1"(xy)+D_2"(\a^{k}\b^{l}(x)D_1'(y))\\
   +&D_1"(\a^{s}\b^{t}(x)D_2'(y))-\a^{k+s}\b^{l+t}(x)D_1 'D_2'(y)\\
   =& [D_1",D_2"](xy)-\a^{k+s}\b^{l+t}(x)[D_1 ' ,D_2 '](y).
\end{align*}

 Then $[D_1,D_2]$ is a generalized  $\a^{k+s}\b^{l+t}$-derivation on $A$. Therefore $GDer(A)$  is a  BiHom-subalgebra of $(U,[\c,\c],\widetilde{\a},\widetilde{\b})$.

 Similarly, we can show that $ QDer(A)$ and $C(A)$ are also  BiHom-subalgebras of $(U,[\c,\c],\widetilde{\a},\widetilde{\b})$.
\end{proof}
\begin{prop}
We have $SGDer(A)=QDer(A)+QC(A)$.
\end{prop}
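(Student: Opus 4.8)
The plan is to prove the equality of sets $SGDer(A) = QDer(A) + QC(A)$ by double inclusion, working directly from the defining identities of each class. First I would establish the easier inclusion $QDer(A) + QC(A) \subseteq SGDer(A)$. Take $D_1 \in QDer_{\a^k\b^l}(A)$ associated with some $D_1'$, so that $D_1'(xy) = D_1(x)\a^k\b^l(y) + \a^k\b^l(x)D_1(y)$, and take $\theta \in QC_{\a^k\b^l}(A)$, so that $\theta(x)\a^k\b^l(y) = \a^k\b^l(x)\theta(y)$. I would then show that $D := D_1 + \theta$ is a symmetric generalized derivation. The candidate associated maps should be $D' = D_1 - \theta$ and $D'' = D_1'$. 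Checking the generalized derivation identity $D''(xy) = D(x)\a^k\b^l(y) + \a^k\b^l(x)D'(y)$ amounts to expanding the right-hand side and using the quasi-centroid identity to cancel the $\theta$-terms, leaving exactly $D_1'(xy)$; the symmetric form $D'(x)\a^k\b^l(y) + \a^k\b^l(x)D(y)$ is checked by the same cancellation with signs reversed. One should note that to add elements cleanly the two summands must carry the same $\a^k\b^l$ weight, so I would remark that it suffices to argue weight by weight (and the bracket $[D,\a]=[D,\b]=0$ conditions are inherited by sums).

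For the reverse inclusion $SGDer(A) \subseteq QDer(A) + QC(A)$, I would start from a symmetric generalized derivation $D$ associated with $D'$ and $D''$, so that for all $x,y \in A$,
\begin{align*}
D''(xy) &= D(x)\a^k\b^l(y) + \a^k\b^l(x)D'(y) \\
&= D'(x)\a^k\b^l(y) + \a^k\b^l(x)D(y).
\end{align*}
The natural idea is to split $D$ into a symmetric average and an antisymmetric remainder relative to $D'$. Concretely I would set
$$D_1 := \tfrac{1}{2}(D + D') \quad \text{and} \quad \theta := \tfrac{1}{2}(D - D'),$$
so that $D = D_1 + \theta$ by construction, and then verify that $D_1 \in QDer(A)$ and $\theta \in QC(A)$. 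Adding the two displayed expressions for $D''(xy)$ and dividing by two shows $D''(xy) = D_1(x)\a^k\b^l(y) + \a^k\b^l(x)D_1(y)$, which is exactly the quasi-derivation identity for $D_1$ with associated map $D''$. Subtracting the two expressions and dividing by two yields $0 = \theta(x)\a^k\b^l(y) - \a^k\b^l(x)\theta(y)$, which is precisely the quasi-centroid identity for $\theta$. The commuting conditions $[D_1,\a]=[D_1,\b]=[\theta,\a]=[\theta,\b]=0$ follow immediately from the corresponding conditions on $D$ and $D'$ since linear combinations preserve them, and here I use that the characteristic of $\mathbb{K}$ is zero so that division by $2$ is legitimate.

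The only genuine subtlety, and the step I expect to require care rather than cleverness, is the bookkeeping of the exponents $\a^k\b^l$: the definitions of $QDer$, $QC$, and $SGDer$ are all graded by the pair $(k,l)$, and $Der(A)$, $QDer(A)$, etc. are defined as direct sums over all such weights. I would therefore phrase the whole argument at a fixed homogeneous weight $\a^k\b^l$, establishing $SGDer_{\a^k\b^l}(A) = QDer_{\a^k\b^l}(A) + QC_{\a^k\b^l}(A)$, and then take the direct sum over $(k,l)$ to conclude. This keeps the averaging decomposition well-defined, since $D$ and $D'$ associated to a homogeneous symmetric generalized derivation share the same weight, so $D_1$ and $\theta$ are again homogeneous of that weight and the sum on the right-hand side is internal to the fixed graded piece. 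The computations themselves are routine bilinear manipulations, so no further obstacle should arise beyond this grading discipline.
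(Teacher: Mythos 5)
Your proposal is correct and takes essentially the same approach as the paper: the crucial step is the identical averaging decomposition $D=\tfrac{1}{2}(D+D')+\tfrac{1}{2}(D-D')$, where adding the two symmetric identities gives the quasi-derivation property (with associated map $D''$) and subtracting them gives the quasi-centroid property. The only differences are that you explicitly verify the inclusion $QDer(A)+QC(A)\subseteq SGDer(A)$ (which the paper dismisses as straightforward) and that you are careful about working weight by weight in the $\alpha^{k}\beta^{l}$-grading, both of which are sound refinements of the paper's argument rather than a different method.
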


\begin{proof}
Let $D \in SGDer(A)$ associated with $D'$ and $D"$. Then for any $x, y \in A$, we have
$$D^{''}(xy)= D(x)\alpha^{k}\beta^{l}(y)+\alpha^{k}\beta^{l}(x)D^{'}(y)=
D'(x)\alpha^{k}\beta^{l}(y)+\alpha^{k}\beta^{l}(x)D(y).
$$
We remark that $D= \frac{D+D'}{2}+  \frac{D-D'}{2}$. So we will prove that
$ \frac{D+D'}{2} \in QDer(A)$ and $ \frac{D-D'}{2} \in CD(A)$. For this, take $x,y \in A$, then

\begin{align*}
 & \frac{D+D'}{2}(x)\a^k\b^l(y)+\a^k\b^l(x)\frac{D+D'}{2}(y)  \\
  &=\frac{1}{2}(D(x) \a^k\b^l(y)+ D'(x) \a^k\b^l(y)+
 \a^k\b^l(x)D(y)  +  \a^k\b^l(x)D'(y) ) \\
   & =D"(xy),
\end{align*}
which implies that  $\frac{D+D'}{2} \in QDer(A)$.

On the other hand, we have
\begin{align*}
    \frac{D-D'}{2}(x)\a^k\b^l(y) =&\frac{1}{2}(D(x)\a^k\b^l(y)-D'(x)\a^k\b^l(y) )\\
    =&\frac{1}{2}(\a^k\b^l(x)D(y)-\a^k\b^l(x)D'(y)) \\
    =&\a^k\b^l(x)\frac{D-D'}{2}(y),
\end{align*}
which means that   $\frac{D-D'}{2} \in CD(A)$. Hence $D \in QDer(A)+QC(A)$.
Moreover, it is straightforward  to prove the $QDer(A)+QC(A) \subset SGD(A)$.  Therefore $SGD(A)=QDer(A)+QC(A)$.

\end{proof}



\end{document}